\newtheorem{theorem}{Theorem}[section]
\newtheorem{corollary}[theorem]{Corollary}
\newtheorem{lemma}[theorem]{Lemma}
\newtheorem{proposition}[theorem]{Proposition}
\theoremstyle{definition}
\newtheorem{definition}[theorem]{Definition}
\theoremstyle{definition}
\newtheorem{example}[theorem]{Example}
\theoremstyle{definition}
\newtheorem{remark}[theorem]{Remark}
\def\Z{\mathbb{Z}}
\def\N{\mathbb{N}}
\def\C{\mathbb{C}}
\def\R{\mathbb{R}}
\def\I{\mathbb{I}}
\def\S{\mathbb{S}}
\def\P{\mathbb{P}}
\def\Co{\mathcal{C}}
\def\n{{\bf{n}}}
\def\m{{\bf{m}}}
\def\k{{\bf {k}}}
\def\0{{\bf{0}}}
\def\1{{\bf{1}}}
\def\hofib{\displaystyle\mathop{\textup{hofib}}}
\DeclareMathOperator{\Hom}{\textup{Hom}}
\DeclareMathOperator{\ucolim}{colim}
\DeclareMathOperator{\uhocolim}{hocolim}
\def\Top{\textup{Top}}
\newcommand{\john}[1]{{#1}}
\newcommand{\ul}[1]{{#1}}
\begin{document}

\title[Infinite loop spaces and nilpotent K-theory] 
{Infinite loop spaces and nilpotent K-theory}

\author[A.~Adem]{Alejandro Adem$^{*}$}
\address{Department of Mathematics,
University of British Columbia, Vancouver BC V6T 1Z2, Canada}
\email{adem@math.ubc.ca}

\author[J.~M.~G\'omez]{Jos\'e Manuel G\'omez$^{*}$}
\address{Departmento de Matem\'aticas,
Universidad Nacional de Colombia, Medell\'in, AA 3840, Colombia}
\email{jmgomez0@unal.edu.co}
\thanks{$^{*}$The first author was supported by NSERC. The second author acknowledges the financial 
support of COLCIENCIAS through grant number 121565840569 of
the Fondo Nacional de Financiamiento para la Ciencia, la
Tecnolog\'ia  y la Inovaci\'on, Fondo Francisco Jos\'e de Caldas.}

\author[J.~A.~Lind]{John A. Lind}
\address{Department of Mathematics,
Johns Hopkins University, Baltimore, MD 21218, USA \\
Current address: Universit\"at Regensburg, 93040 Regensburg, Germany}
\email{john.alexander.lind@gmail.com}

\author[U.~Tillmann]{Ulrike Tillmann}
\address{Mathematical Institute, 
Oxford University,  Oxford OX2 6GG, UK}
\email{tillmann@maths.ox.ac.uk}

\begin{abstract}

Using a construction 
derived from the descending central series of the free groups, 
we produce filtrations by infinite loop spaces of the classical 
infinite loop spaces  $BSU$, $BU$, $BSO$, 
$BO$, 
$BSp$,  $BGL_{\infty}(R)^{+}$ 
and $Q_0(\S^{0})$. We show that these infinite loop spaces are 
the zero spaces of \john{non-unital} $E_\infty$-ring spectra.
We introduce the notion of $q$-nilpotent K-theory 
of a CW-complex $X$ for any $q\ge 2$, 
which extends the notion of commutative 
K-theory defined by Adem-G\'omez, 
and show that it
is represented by $\mathbb Z\times B(q,U)$, were $B(q,U)$ is the
$q$-th term of the aforementioned filtration of $BU$. 
 
For the proof  we introduce an alternative way of associating 
an infinite loop space
to a commutative $\I$-monoid and give criteria when it can be identified
with the plus construction on the associated limit space.
Furthermore, we introduce the notion of a commutative $\I$-rig and 
show that they give rise to \ul{non-unital}
$E_\infty$-ring spectra. 
\end{abstract}

\maketitle
\tableofcontents

\section{Introduction}

Let $G$ denote a locally compact, Hausdorff topological group such
that $1_{G}\in G$ is a non-degenerate 
base point. It is well-known that we can obtain a model for the 
classifying space $BG$ 
as the geometric realization of the classical bar construction 
$B_{*}G$. Now fix an integer $q\ge 2$ and let $\Gamma^{q}_{n}$ be the 
$q$-th stage of the descending central series of  the free group on 
$n$-letters $F_{n}$, with the convention $\Gamma_{n}^{1}=F_{n}$. 
Consider the set of homomorphisms $B_{n}(q,G):=\Hom(F_{n}/\Gamma_{n}^{q},G)$. 
If $e_{1},\dots, e_{n}$ are generators of $F_{n}$, then evaluation  
on the classes corresponding to $e_{1},\dots, e_{n}$ provides 
a natural inclusion $B_{n}(q,G)\subset G^{n}$. 
Using this inclusion we  can give 
$B_{n}(q,G)$ the subspace topology.  Therefore $B_{n}(q,G)$ 
is precisely the space of ordered $n$-tuples in $G$ generating a subgroup 
of $G$ with nilpotence class less than $q$. For each fixed $q\ge 2$  
the collection $\{B_{n}(q,G)\}_{n\ge 0}$ 
forms a simplicial space with face and degeneracy maps induced by those in the  
bar construction.
The geometric realization of this simplicial space is denoted by  $B(q,G)$.  
These spaces were first introduced in \cite{ACT}, where many of
their basic properties were established. 
They give rise to a natural filtration of the classifying space
\[
B(2,G)\subset B(3,G)\subset \cdots\subset B(q,G)\subset B(q+1,G)\subset \cdots 
\subset BG.
\]
For $q=2$  we obtain $B_{com}G:=B(2,G)$ which is constructed by 
assembling  the different spaces of ordered commuting $n$-tuples in the 
group $G$.                                                                               
In \cite{AG} it was shown that for Lie groups this 
space plays the role of a classifying space for commutativity. 
More generally  $B(q,G)$ is a classifying space for $G$-bundles of transitional nilpotency class
less than $q$.

For the infinite unitary group $U=\ucolim_{n\to \infty}U(n)$, it is 
well known that $BU$ is the infinite loop space underlying a \john{non-unital $E_\infty$-ring spectrum, namely the homotopy fiber of the Postnikov section $ku \longrightarrow H\Z$.  In other words, 
$BU$ is a so called non-unital $E_\infty$-ring space. }
A basic question is whether the above  gives rise to a filtration of $BU$ by non-unital
$E_\infty$-ring spaces.
The main purpose of this paper is to show that indeed this is the case, not only for 
$U$ but also for other linear groups.

\begin{theorem}
The spaces $B(q,SU)$, $B(q,U)$, $B(q,SO)$ 
$B(q,O)$ 
and  $B(q, Sp)$ provide a 
filtration by \john{non-unital} $E_\infty$-ring spaces of the classical infinite loop spaces 
$BSU$, $BU$, $BSO$,
$BO$ 
and $BSp$ respectively. 
\end{theorem}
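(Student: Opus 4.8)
The plan is to realize, for each $G\in\{SU,U,SO,O,Sp\}$, the family $\{B(q,G(n))\}_n$ as a commutative $\I$-rig, and then to invoke the general machinery for commutative $\I$-monoids and commutative $\I$-rigs developed in this paper: a commutative $\I$-rig gives rise to a non-unital $E_\infty$-ring spectrum, and the criterion comparing the associated infinite loop space with the plus construction on the limit space identifies its zero space with $B(q,G)$. The filtration statement then follows by naturality, provided the top of the tower reproduces the classical structure on $BG$.

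First I would build the $\I$-rig. For a finite set $\mathbf n$ write $G(\mathbf n)$ for the copy of $G$ acting on the free module on $\mathbf n$, and set $M_{q,G}(\mathbf n):=B(q,G(\mathbf n))$. An injection $\mathbf n\hookrightarrow\mathbf m$ gives $G(\mathbf n)\hookrightarrow G(\mathbf m)$ by extending operators by the identity on the complement, hence a map $B(q,G(\mathbf n))\to B(q,G(\mathbf m))$, so $M_{q,G}$ is an $\I$-space. Its additive structure is block sum $G(\mathbf n)\times G(\mathbf m)\to G(\mathbf n\sqcup\mathbf m)$: this restricts to the subspaces $B(q,-)$ because the $q$-th stage of the lower central series of a direct product is the product of the $q$-th stages, so a block sum of two tuples each generating a subgroup of nilpotence class $<q$ again generates a subgroup of class $<q$; and it is strictly commutative as a map of $\I$-spaces, since the symmetry $\mathbf n\sqcup\mathbf m\cong\mathbf m\sqcup\mathbf n$ acts on the groups through a permutation operator that block sum intertwines. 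Its multiplicative structure is the Kronecker product $G(\mathbf n)\times G(\mathbf m)\to G(\mathbf n\times\mathbf m)$: if $\bar A=(A_1,\dots,A_\ell)$ and $\bar B=(B_1,\dots,B_\ell)$ each generate a subgroup of class $<q$, then $\langle A_1\otimes B_1,\dots,A_\ell\otimes B_\ell\rangle$ is contained in $\langle A_i\otimes I\rangle\cdot\langle I\otimes B_j\rangle$, a quotient of $\langle\bar A\rangle\times\langle\bar B\rangle$ and hence again of class $<q$; so the Kronecker product also restricts to the $B(q,-)$ spaces. The remaining associativity, commutativity, distributivity and unit coherences are inherited from the corresponding identities in the ambient linear groups, which already make $\{BG(\mathbf n)\}_n$ a commutative $\I$-rig. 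The symplectic case is the most delicate, because the Kronecker self-pairing of quaternionic data naturally takes values in orthogonal rather than symplectic operators; I would treat it separately, e.g.\ via the induced module structure over $M_{q,O}$ together with a direct examination of the resulting multiplication.

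Feeding $M_{q,G}$ into this machinery produces a non-unital $E_\infty$-ring spectrum $R(q,G)$, whose underlying infinite loop space is the output of the alternative construction applied to the commutative $\I$-monoid $M_{q,G}$. The limit space of this $\I$-monoid is $\colim_n B(q,G(n))=B(q,G)$: here one uses that $F_\ell/\Gamma_\ell^q$ is finitely presented, so that $\Hom(F_\ell/\Gamma_\ell^q,-)$ — and therefore $B(q,-)$ after geometric realization — commutes with filtered colimits of groups. I would then verify the hypotheses of the identification criterion for $M_{q,G}$, obtaining $\Omega^\infty R(q,G)\simeq B(q,G)^+$. The plus construction is harmless here: for $G$ connected the first skeleton of the geometric realization $B(q,G)$ is a suspension of the path-connected space $G$, hence simply connected, so $B(q,G)$ is simply connected; and for $G=O$ the group $\pi_1 B(q,O)$ is $\Z/2$ (as comparison with $BO$ shows), which has no nontrivial perfect subgroup. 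In every case $B(q,G)^+\simeq B(q,G)$, so $B(q,G)$ is the zero space of the non-unital $E_\infty$-ring spectrum $R(q,G)$.

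Finally, the filtration. The surjections $F_\ell/\Gamma_\ell^{q+1}\twoheadrightarrow F_\ell/\Gamma_\ell^q$ induce closed inclusions of commutative $\I$-rigs $M_{q,G}\hookrightarrow M_{q+1,G}\hookrightarrow\cdots$, all of which sit inside $M_{\infty,G}$, where $M_{\infty,G}(\mathbf n)=BG(\mathbf n)$; by naturality of the construction these give maps of non-unital $E_\infty$-ring spectra, hence maps of non-unital $E_\infty$-ring spaces $B(q,G)\to B(q+1,G)\to\cdots\to BG$. To see that this is a filtration of $BG$ with its \emph{classical} non-unital $E_\infty$-ring structure, one checks that the construction applied to $M_{\infty,G}$ recovers the known spectrum with zero space $BG$ — the homotopy fiber of $ku\to H\Z$ for $G=U$, and its analogues for $SU$, $SO$, $O$ and $Sp$. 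I expect the main obstacle to lie in two places: verifying the identification criterion for each $M_{q,G}$, which comes down to a homological-stability (or stable-connectivity) statement for the sequence $n\mapsto B(q,G(n))$ and is the technical heart of the argument; and, at the top of the tower, the comparison that pins down the resulting structure on $BG$ as the classical one. The $\I$-rig coherence verifications — and in particular the symplectic case — are an additional, but more routine, source of work.
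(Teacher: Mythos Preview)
Your overall strategy matches the paper's: build the commutative $\I$-rig $\n\mapsto B(q,G(n))$, apply Theorem~\ref{thm:hofib_ringspace} to get a non-unital $E_\infty$-ring space $\hofib\rho^{B(q,G(-))}$, then use Theorem~\ref{loop permutative} to identify this with $B(q,G)^+\simeq B(q,G)$, with naturality in $q$ giving the filtration.

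There is, however, a genuine gap in your understanding of the identification step. You write that verifying the hypotheses of the criterion ``comes down to a homological-stability (or stable-connectivity) statement for the sequence $n\mapsto B(q,G(n))$.'' This is not what is required, and no such stability result is invoked (nor is one known for these spaces). The relevant hypothesis of Theorem~\ref{loop permutative} is that $\Sigma_\infty$ acts \emph{trivially} on $H_*(B(q,G))$. The paper verifies this by a short observation: the conjugation action of $\Sigma_n$ on $B(q,G(n))$ factors through the conjugation action of $G(n)$ on itself, and since $G(n)$ is path-connected (for $G=U,SU,Sp$), every element acts homotopically---hence homologically---trivially. For $G=SO$ and $G=O$ a small extra argument is needed, since $\Sigma_n\not\subset SO(n)$: the alternating group $A_n$ does lie in $SO(n)$, and for odd $n$ any odd permutation matrix can be joined by a path to the central element $-I$, which acts trivially by conjugation; this suffices after passing to the colimit. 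So what you flagged as the ``technical heart'' is in fact an elementary consequence of path-connectedness of the ambient group, and not a stability theorem at all.

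The remaining hypothesis---that $B(q,G)^+$ is abelian---the paper handles by exhibiting an $H$-space structure on $B(q,G)$ via block sum along a fixed bijection $\N\sqcup\N\to\N$. Your simple-connectivity argument via the $1$-skeleton $\Sigma G$ is fine for connected $G$, but the $H$-space route is what also covers $G=O$ cleanly. Your concern about the symplectic Kronecker product is legitimate (the tensor of two quaternionic representations is real); the paper asserts the $\I$-rig structure for $Sp$ without further comment.
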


The  $q$-nilpotent 
K-theory of a space $X$ is 
defined using
isomorphism classes of bundles on $X$
whose transition functions generate subgroups of
nilpotence class less than $q$. 
We show
that  
$K_{q-nil}(X)\cong [X,\mathbb Z \times B(q,U)]$, from which we obtain

\begin{corollary}
$K_{q-nil}(-)$ is the 
zero-th term of a
generalized multiplicative cohomology theory.
\end{corollary}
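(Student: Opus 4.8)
The plan is to upgrade the non-unital $E_\infty$-ring spectrum with zero space $B(q,U)$ to a genuine (unital) connective $E_\infty$-ring spectrum $\mathbf{ku}_q$ with $\Omega^\infty\mathbf{ku}_q\simeq\mathbb{Z}\times B(q,U)$. Granting this, the corollary is immediate: any connective $E_\infty$-ring spectrum $R$ represents a multiplicative generalized cohomology theory $R^*(-)$ with $R^0(X)=[X,\Omega^\infty R]$, and taking $R=\mathbf{ku}_q$ gives $\mathbf{ku}_q^0(X)=[X,\mathbb{Z}\times B(q,U)]\cong K_{q-nil}(X)$ by the representability statement above, the resulting ring structure on $K_{q-nil}(X)$ being the one given by tensor product of bundles.

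In more detail, I would argue as follows. By the theorem above, $B(q,U)$ is a non-unital $E_\infty$-ring space, so it already carries an $E_\infty$-structure for Whitney sum together with a compatible multiplicative (non-unital) $E_\infty$-structure; on strata the latter is induced by the tensor-product homomorphisms $U(m)\times U(n)\to U(mn)$, $(g,h)\mapsto g\otimes h$. These restrict to well-defined simplicial maps $B_\bullet(q,U(m))\times B_\bullet(q,U(n))\to B_\bullet(q,U(mn))$, since $(g\otimes h)(g'\otimes h')=(gg')\otimes(hh')$ and since the subgroup generated by elements $g_i\otimes h_i$ is a subquotient of $\langle g_1,\dots,g_k\rangle\times\langle h_1,\dots,h_k\rangle$, hence again of nilpotence class $<q$. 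Adjoining the dimension grading, $\mathbb{N}\times B(q,U)$ becomes a \emph{unital} $E_\infty$-ring space with unit the class of the trivial line bundle; this is nothing but the restriction to $q$-nilpotent bundles of the bipermutative structure on complex vector bundles that yields $\mathbb{Z}\times BU=\Omega^\infty\mathbf{ku}$. The recognition principle for $E_\infty$-ring spaces, which in particular performs the additive group completion so that the zero space of the output is $\mathbb{Z}\times B(q,U)$, then produces $\mathbf{ku}_q$. One moreover obtains a rank map $\mathbf{ku}_q\to H\mathbb{Z}$ whose fibre is the non-unital $E_\infty$-ring spectrum with zero space $B(q,U)$, and, from the $E_\infty$-ring maps $B(q,U)\subseteq B(q+1,U)\subseteq BU$, a tower $\mathbf{ku}_2\to\mathbf{ku}_3\to\cdots\to\mathbf{ku}$ refining the usual $\mathbf{bu}\to\mathbf{ku}\to H\mathbb{Z}$.

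The main obstacle is the coherence used in the middle step: one must know that the tensor-product pairings are associative and commutative and distribute over Whitney sum up to coherent higher homotopy, so that one really obtains an $E_\infty$-ring space and not merely a homotopy-associative ring space. This is exactly what the commutative $\I$-rig formalism developed earlier in the paper is built to supply, so in practice the corollary becomes a formal consequence of having already organized $\{B(q,U(n))\}_{n\ge 0}$ into a commutative $\I$-rig; the remaining ingredients, the recognition principle and the representability isomorphism, are then standard.
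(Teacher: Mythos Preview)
Your outline contains a genuine gap in the construction of $\mathbf{ku}_q$.  You write that ``$\mathbb{N}\times B(q,U)$ becomes a unital $E_\infty$-ring space \dots\ this is nothing but the restriction to $q$-nilpotent bundles of the bipermutative structure on complex vector bundles,'' and then that the $\I$-rig formalism supplies the required coherence.  But these two things are not the same object.  The bipermutative category producing $ku$ has objects $\n$ and morphism spaces $U(n)$; its classifying space is $\bigsqcup_n BU(n)$.  There is no way to ``restrict'' this category so that its classifying space becomes $\bigsqcup_n B(q,U(n))$, because $B(q,U(n))$ is not the nerve of any subgroup or subcategory of $U(n)$.  The paper says exactly this at the opening of Section~\ref{commutative I-monoids}: ``The standard construction of the infinite loop space structure on $BU$ from the permutative category of complex vector spaces and their isomorphisms does not restrict to give an infinite loop space structure on $B(q,U)$.''

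What the $\I$-rig machinery actually produces is the bipermutative category $\P\ltimes X$, whose classifying space is $\bigsqcup_{n\ge 0} E\Sigma_n\times_{\Sigma_n}B(q,U(n))$, \emph{not} $\bigsqcup_{n\ge 0} B(q,U(n))$.  The associated unital $E_\infty$-ring spectrum $K(\P\ltimes X)$ therefore has zeroth space $\Z\times\bigl(E\Sigma_\infty\times_{\Sigma_\infty}B(q,U)\bigr)^+$, not $\Z\times B(q,U)$.  The paper recovers $B(q,U)$ only as the homotopy fiber of $\rho^X$ over $\Omega B(B\P)\simeq Q_0(\S^0)$ (Theorems~\ref{thm:hofib_ringspace} and~\ref{loop permutative}), and a fiber of a map of $E_\infty$-ring spectra is only a \emph{non-unital} $E_\infty$-ring spectrum.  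So the machinery does not hand you a unital $\mathbf{ku}_q$ with $\Omega^\infty\mathbf{ku}_q\simeq\Z\times B(q,U)$; producing such a spectrum would require an additional argument (for instance, exhibiting the fiber spectrum as an $H\Z$-algebra and then unitalizing over $H\Z$), which you have not supplied.

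The paper's route to the corollary is accordingly different and more modest.  It does not build $\mathbf{ku}_q$.  Instead it combines the non-unital $E_\infty$-ring structure on $B(q,U)$ with the representability isomorphism $K_{q-nil}(X)\cong[X,\Z\times B(q,U)]$ proved in Section~\ref{K theory}: the infinite loop space structure on $B(q,U)$ gives the generalized cohomology theory, and the multiplicative structure is the one already visible on $K_{q-nil}(X)$ via tensor product, matched with the (non-unital) ring pairing on the spectrum with zeroth space $B(q,U)$ together with the evident multiplication on the $\Z$-factor.  The ``hence'' at the end of Section~\ref{K theory} packages this without constructing a unital representing ring spectrum.
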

\noindent In particular we obtain a sequence of multiplicative cohomology theories
\[
K_{com}(X)=K_{2-nil}(X)\to K_{3-nil}(X)\to \cdots \to K_{q-nil}(X)\to \cdots \to K(X).
\]
We also show that $B(q,U)\to BU$ splits as a map of infinite loop spaces,
whence we see that topological K-theory is a direct summand in $K_{q-nil}$.

The infinite loop space structure on $B(q,G)$ for $G=U, SU, SO, O, Sp$ is obtained by using 
the machinery of 
commutative $\I$-monoids first introduced by B\"okstedt and developed in 
\cite {CS}, \cite {SS}, \cite {Lind}. 
Here $\I$ is the category of finite sets and injections. 
In addition to the usual construction, we associate an infinite loop space to a commutative 
$\I$-monoid by restricting the usual homotopy colimit construction to 
the  subcategory $\P$ of finite sets and isomorphisms.
This allows us to identify the  homotopy type of the homotopy colimit under certain conditions. 
Another addition to infinite loop space theory is
the introduction of the notion of a commutative $\I$-rig which we show to give rise 
to a bipermutative category and hence an $E_\infty$-ring spectrum.

Our main examples above all arise from commutative $\I$-rigs 
where we can identify the infinite loop space
as the plus construction of the associated limit space.
A more complicated situation 
arises for $Q_0(\mathbb S^0)\simeq B\Sigma_{\infty}^+$ and 
$BGL_{\infty}(R)^{+}$. Our methods give rise to 
natural sequences of $E_\infty$-ring spaces but the terms are not easy to describe.

The outline of this article is as follows. In Section \ref{commutative I-monoids} 
we use the machinery of \john{
commutative $\I$-monoids to produce two associated  infinite loop 
spaces, one of which is a non-unital $E_{\infty}$ ring space when the $\I$-monoid is an $\I$-rig}. In Section \ref{identifying} 
we show that these are homotopy equivalent  and identify them
under suitable assumptions. 
Then in Section \ref{applications} 
we apply these results to prove Theorem 1.1 and show that the spaces $B(q,U)$ for $q\ge 2$ are
infinite loop spaces and that $BU$ splits off. Finally, in Section \ref{K theory} 
we introduce the notion of $q$-nilpotent K-theory and show that it is
represented by the infinite loop spaces $\mathbb Z\times B(q,U)$,
answering the question raised for commutative K-theory in \cite{AG}.

The authors would like to thank Christian Schlichtkrull for helpful 
conversations about commutative $\I$-monoids, Simon Gritschacher for
drawing our attention to \cite {FiedoroOgle} and the referee for providing very useful comments.
   
\section{Commutative $\I$-monoids  and infinite loop spaces}\label{commutative I-monoids}

The standard construction of the infinite loop space structure  on $BU$
from the permutative category of complex vector spaces and their isomorphisms 
does not restrict to give an infinite loop space structure on $B(q, U)$. 
Instead we are going to use certain constructions on commutative $\I$-monoids.
More precisely, we will give  two constructions of
permutative categories from commutative $\I$-monoids.
For the case of interest the permutative categories are actually  bipermutative and
hence give rise to $E_\infty$-ring spectra.
We start  by setting up some notations and basic definitions 
following  \cite{CS},  \cite{SS}, and  \cite{Lind}. We will use \cite{EM} as a 
reference for bipermutative categories and the associated  
multiplicative infinite loop space machinery. 

\subsection{The category $\I$ and its subcategories $\P$ and $\N$}
These three categories are skeletons of the category of finite sets and injections, 
the category of finite sets and isomorphisms, and 
the translation category associated to the monoid
of natural numbers. We will use the following notation.
 
For every integer $n\ge 0$ let $\n$ denote 
the set $\{1,2,\dots,n\}$. When $n=0$  we use the  
convention ${\bf{0}}:=\emptyset$.  Let $\I$ denote the category whose objects 
are the elements of the form $\n$ 
for all integers $n\ge 0$ with morphisms given 
by all injective maps. Note that in particular ${\bf{0}}$ is an initial object 
in the category  $\I$.   $\I$ is a
symmetric monoidal  category
under concatenation $\m\sqcup\n:=\{1,2,\dots, m+n\}$
with the symmetry
morphism given by the $(m,n)$-shuffle
map 
$$
\tau_{\m,\n}:\m\sqcup \n\to \n\sqcup \m.
$$ 
It 
is also symmetric monoidal
under Cartesian product 
$$
\m \times \n := \{ 1=(1,1), 2=(1,2), \dots, n+1= (2,1), \dots, mn=(m,n)\}
$$ 
given by lexicographic ordering. By definition
${\bf 0 }  \times \n = {\bf 0} = \n \times {\bf 0}$. The associated  
symmetry morphism is given by a permutation 
$$
\tau^\times_{\m \n}: \m \times \n  \to \n \times \m.
$$ 
The latter monoidal product is distributive over the former. More precisely
left distributivity  
$$
\delta ^l_{\m, \n, \k}:  
\m \times \k \sqcup \n \times \k
\to (\m \sqcup \n) \times \k 
$$ 
is given by the identity and right distributivity is  given by a permutation 
$$ 
\delta ^r_{\m, \n, \k}:
 \m \times \n \sqcup \m \times \k \to
 \m \times (\n \sqcup  \k).
$$
These two structures
make $\I$ into a bipermutative category in the sense of \cite[Def. 3.6] {EM}. 

$\I$ has two natural  subcategories. Let
$\P$ be the totally disconnected subcategory containing all objects and 
all isomorphisms $\sigma : \n \to \n$ but no other
morphisms, and let $\N$ denote the connected subcategory containing   
all objects, their identities and only the canonical
inclusions $j: \n \to \m$. While $\P$ is a bipermutative subcategory, 
$\N$ does not inherit any  monoidal structure from $\I$.

\subsection{Definitions of commutative  $\I$-monoids and $\I$-rigs}
An $\I$-space is a functor $X:\I\to \Top$. 
Every morphism in $\I$ 
can be factored as a composition of a canonical inclusion $j:\n\hookrightarrow \m$ 
and a permutation $\sigma:\m\to \m$. Therefore 
an $\I$-space $X:\I\to \Top$  determines a  sequence of spaces 
$X(\n)$  together with an induced  action of 
the symmetric group $\Sigma_{n}$ for $n\ge 0$, 
and structural 
maps $j_{n}:X(\n)\to X(\n+{\bf{1}})$ that are equivariant in 
the sense that $j_{n}(\sigma\cdot x)=\sigma \cdot j_{n}(x)$
for every $\sigma \in \Sigma_{n}$ and $x\in X(\n)$. On the right hand side 
we see $\sigma$ as element in $\Sigma_{n+1}$ via the canonical inclusion 
$\Sigma_{n}\hookrightarrow \Sigma_{n+1}$. 
Vice versa, given such a sequence of $\Sigma _n$-spaces $X(\n)$ and 
compatible structure maps $j_n$, they give rise to an $\I$-space
if and only if for $m \geq n$ and any two elements $ \sigma, \sigma ' \in \Sigma _m$ which 
restrict to the same permutation of $\n$ we have $\sigma (x) = \sigma '(x)$
for all $x \in j (X(\n))$.
We note that this condition is not satisfied by the sequence $X(\n) = \Sigma_n$ with the left 
or right  multiplication action, 
but {\it is} satisfied by the sequence 
$X(\n) = \n$ with the natural permutation action since  $\n \cong \I({\bf 1}, \n)$.

We say that an $\I$-space is an 
$\I$-monoid if it comes equipped with a natural transformation
\[
\mu_{\m,\n}:X(\m)\times X(\n)\to X(\m\sqcup \n)
\]
of functors defined on $\I \times \I$ and a natural transformation
\[
\eta_{\n} \colon \ast \to X(\n)
\]
from the constant $\I$-space $\ast(\n) = \ast$ 
to $X$ satisfying associativity and unit axioms for $* \in X(\0)$.  
We say that $X$ is a commutative $\I$-monoid 
if $\mu$ is commutative, meaning 
that the diagram
\begin{equation*}
\begin{CD}
X(\m)\times X(\n)@>\mu_{\m,\n}>> X(\m\sqcup \n)\\
@V\tau VV     @V\tau_{\m,\n} VV\\
X(\n)\times X(\m)@>\mu_{\n,\m}>>
X(\n \sqcup \m)
\end{CD}
\end{equation*}
commutes, where $\tau(x,y)=(y,x)$.  

An $\I$-rig is a commutative $\I$-monoid equipped with a natural transformation
\[
\pi_{\m, \n} : X(\m) \times X(\n)  \to X( \m \times \n)
\] 
\john{of functors defined on $\P \times \P$ and an element $1 \in X(1)$ satisfying associativity and unit axioms,} as well
as left distributivity, i.e. that the diagram 
$$
\begin{CD}
(X(\m)\times X(\n)) \times X(\k) @>\pi_{\m \sqcup \n, \k} 
\circ (\mu _{\m, \n} \times 1)>> X((\m\sqcup \n) \times \k )\\
@V (1\times \tau \times 1 )\circ ( 1 \times 1 \times \triangle) 
 VV     @A\delta^l_{\m,\n,\k} AA\\
X(\m)\times X(\k) \times X(\n) \times X(\k) @>\mu_{\m \times \k , \n \times \k} 
\circ (\pi_{\m, \k} \times \pi _{\n, \k}) >>
X(\m \times \k \sqcup \n \times \k) 
\end{CD}
$$
commutes, and right distributivity, which is given by an analogous 
commutative diagram. Here $\triangle$ is the diagonal map.  We emphasize that $\pi$ is only required to be natural on the subcategory  $\P \times \P$ of $\I \times \I$.\footnote{In fact, we do not know of any non-trivial examples where $\pi$ may be extended to a natural transformation of functors defined on $\I \times \I$.  The examples of $\I$-rigs that we discuss in \ul{Section}  \ref{subsec:examples} do not satisfy this additional naturality condition.  Indeed, as we will see in the following sections, an $\I$-rig that does satisfy this condition and has each level $X(\n)$ a connected space would give rise to a connected $E_{\infty}$-ring space $\uhocolim_{\I} X$.  An $E_{\infty}$-ring space whose multiplicative unit and additive unit lie in the same path component is contractible, so such examples would only give rise to trivial $E_{\infty}$-ring spectra.}

A commutative $\I$-rig is an $\I$-rig  
in which $\pi$ is commutative in the sense that
the diagram
$$ 
\begin{CD}
X(\m)\times X(\n)@>\pi_{\m,\n}>> X(\m\times \n)\\
@V\tau VV     @V\tau^\times_{\m, \n} VV\\
X(\n)\times X(\m)@>\pi_{\n,\m}>>
X(\n \times \m)
\end{CD}
$$
commutes.  A natural transformation $T$ between
two $\I$-spaces  $X$ and $Y$ defines a map of 
commutative $\I$-monoids ($\I$-rigs)
if it commutes with $\mu$ (and $\pi$) in the sense that $T\circ \mu_{\m , \n} = \mu_{\m,\n} 
\circ T\times T$ (and $T\circ \pi_{\m , \n} = \pi_{\m,\n} 
\circ T\times T$). 
We have thus defined a category of $\I$-spaces, a category of commutative $\I$-monoids, and a category of $\I$-rigs.

\subsection{Associated (bi)permutative translation categories}
We will use the following notation for translation categories.
If $Y \colon \Co \to \Top$ is a functor from a 
category $\Co$ to the category of topological spaces, 
we let $\Co \ltimes Y$ denote the translation category on $Y$.  
The translation category, also known as the Grothendieck construction, is a topological 
category whose objects are pairs $(c, x)$ consisting of an object $c$ of $\Co$ and a 
point $x \in Y(c)$.  A morphism in $\Co \ltimes Y$ from $(c, x)$ to $(c', x')$ is a 
morphism $\alpha \colon c \to c'$ in $\Co$ satisfying the equation $Y(\alpha)(x) = x'$.  
For example, if $\Co = G$ is a group, thought of as a one object category, then the 
translation category $G \ltimes Y$ is the action groupoid for the $G$-space $Y$ and 
its classifying space is the homotopy orbit space $B(G \ltimes Y) = EG \times_{G} Y$.  
In general, the classifying space $B(\Co \ltimes Y)$ is homeomorphic to the homotopy 
colimit $\uhocolim_{\Co} Y$ of $Y$ over $\Co$ defined using the bar construction.  
 
Suppose now that $X$ is a commutative $\I$-monoid.  Then the translation category 
$\I \ltimes X$ is a permutative category, as we now explain.  The monoidal 
structure $\oplus$ is defined on objects $(\m, x)$ and $(\n, y)$ by
\[
(\m, x) \oplus (\n, y) = (\m \sqcup \n, \mu_{\m, \n}(x, y)),
\]
and on morphisms $\alpha \colon (\m, x) \to (\m', x')$ and 
$\beta \colon (\n, y) \to (\n', y')$ by letting 
\[
\alpha \oplus \beta \colon (\m, x) \oplus (\n, y) \to (\m', x') \oplus (\n', y')
\]
be determined by the morphism
\[
\alpha \sqcup \beta \colon \m \sqcup \n \to \m' \sqcup \n'
\]
in the category $\I$.  Notice that 
$X(\alpha \sqcup \beta) (\mu_{\m, \n}(x, y)) = \mu_{\m', \n'}(x', y')$ 
by the naturality of $\mu$, so that this is well-defined.  The associativity and unit conditions 
for $X$ imply that $\I \ltimes X$ is a strict monoidal category with strict
unit object 
$(\mathbf{0}, \ast)$ determined by the unit $\eta$ of the $\I$-monoid $X$.  The 
commutativity of $X$ implies that $\I \ltimes X$ is a permutative category,
see for example \cite [Def 3.1]{EM}.  Note  that the  permutative structure on $\I \ltimes X$ 
restricts  to  the subcategory $\P \ltimes X$.

Suppose now that $X$ is a commutative $\I$-rig. 
Then by the same reasoning as above, there is another
permutative category structure on \john{$\P \ltimes X$} with product $\otimes$ 
induced by $\pi$ and strict unit object \john{$({\bf 1}, 1)$}. The distributivity axioms for 
$X$ translate to distributivity 
axioms for bipermutative categories, \cite [Def. 3.6] {EM}.  

Furthermore, a natural transformation $T$ between
two $\I$-spaces  $X$ and $Y$ induces a functor $\I \ltimes X \to \I\ltimes Y$.
If $X$ and $Y$ are commutative $\I$-monoids ($\I$-rigs) and $T$ is a morphism of such 
then the induced functor of 
 translation categories is a functor of 
(bi)permutative categories.

 We have thus proved the following result. 

\begin{proposition}
\john{The assignment $X \mapsto \I \ltimes X$ defines a functor from  the category 
of commutative $\I$-monoids to the category of permutative
categories, and the assignment $ X\mapsto \P \ltimes X$ defines a functor from the category of commutative $\I$-monoids ($\I$-rigs) to the category of (bi)permutative categories.}
\end{proposition}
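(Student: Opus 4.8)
Most of the structure has already been verified in the discussion preceding the statement: it is shown there that $\I\ltimes X$ is a permutative category for a commutative $\I$-monoid $X$, that this structure restricts to $\P\ltimes X$, and that $\P\ltimes X$ carries a second, multiplicative permutative structure making it bipermutative when $X$ is an $\I$-rig. So what remains is to describe the behaviour on morphisms and to check functoriality. Given a morphism $T\colon X\to Y$ of commutative $\I$-monoids, the plan is to define $\I\ltimes T$ by sending an object $(\m,x)$ of $\I\ltimes X$ to $(\m, T_{\m}(x))$ and sending a morphism $\alpha\colon (\m,x)\to(\n,x')$ — that is, an injection $\alpha\colon\m\to\n$ with $X(\alpha)(x)=x'$ — to the same injection $\alpha$. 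Well-definedness on morphisms is exactly the naturality square for $T$: $Y(\alpha)(T_{\m}(x))=T_{\n}(X(\alpha)(x))=T_{\n}(x')$, so $\alpha$ is indeed a morphism $(\m,T_{\m}(x))\to(\n,T_{\n}(x'))$ in $\I\ltimes Y$. Since the underlying injections are untouched, $\I\ltimes T$ preserves composition and identities strictly, the identity natural transformation induces the identity functor, and composition of natural transformations induces composition of functors. The same formula, restricted to isomorphisms, defines $\P\ltimes T$.

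Next I would check that $\I\ltimes T$ is a functor of permutative categories. The defining relation $T\circ\mu^{X}_{\m,\n}=\mu^{Y}_{\m,\n}\circ(T\times T)$ of a morphism of $\I$-monoids says precisely that $(\I\ltimes T)\big((\m,x)\oplus(\n,y)\big)=(\I\ltimes T)(\m,x)\oplus(\I\ltimes T)(\n,y)$, and the statement for morphisms follows because $\alpha\sqcup\beta$ is the same injection on both sides; the strict unit $(\0,\ast)$ is preserved because $T\circ\eta^{X}=\eta^{Y}$. The symmetry isomorphism of $\I\ltimes X$ at $\big((\m,x),(\n,y)\big)$ is the morphism determined by the shuffle $\tau_{\m,\n}\colon\m\sqcup\n\to\n\sqcup\m$ of $\I$, which does not depend on $X$, so $\I\ltimes T$ carries it to the corresponding symmetry of $\I\ltimes Y$. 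Hence $\I\ltimes T$ is a strict symmetric monoidal functor, giving the first asserted functor; restricting everything to isomorphisms gives the analogous statement for $\P\ltimes X$ with its additive structure.

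For the multiplicative part, assume now that $X$ and $Y$ are $\I$-rigs and $T$ is a morphism of $\I$-rigs. Here it is important that $\P\ltimes X$ uses only morphisms of $\P$, so the relation $T\circ\pi^{X}_{\m,\n}=\pi^{Y}_{\m,\n}\circ(T\times T)$ — which is only required to hold on $\P\times\P$ — is exactly what is needed to show that $\P\ltimes T$ is strict monoidal for $\otimes$ and preserves the multiplicative unit $(\1,1)$. The multiplicative symmetry $\tau^{\times}_{\m,\n}$ and the distributivity isomorphisms $\delta^{l}_{\m,\n,\k}$, $\delta^{r}_{\m,\n,\k}$ of $\P\ltimes X$ are once more induced from permutations of $\I$ and hence are carried to the corresponding isomorphisms of $\P\ltimes Y$. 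Combining this with the previous paragraph shows that $\P\ltimes T$ is a functor of bipermutative categories in the sense of \cite{EM}, completing the proof.

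I do not expect a real obstacle here. The only two points requiring any care are the well-definedness of $\I\ltimes T$ and $\P\ltimes T$ on morphisms, which is literally the naturality of $T$, and the observation — which is what makes everything strict — that all of the coherence isomorphisms (unit, associativity, symmetry, left and right distributivity) of the translation categories are pulled back from $\I$ and are therefore unchanged by passing from $X$ to $Y$; consequently no coherence diagram needs to be re-examined for the induced functors beyond what was already established for the (bi)permutative categories themselves.
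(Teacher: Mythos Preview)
Your proposal is correct and follows exactly the approach of the paper: the paper's proof consists entirely of the discussion preceding the proposition, which establishes the (bi)permutative structure on the translation categories and then observes that a morphism $T$ of commutative $\I$-monoids (or $\I$-rigs) induces a functor of (bi)permutative categories. You have simply spelled out in detail the verifications the paper leaves implicit, in particular the well-definedness of $\I\ltimes T$ via naturality of $T$ and the fact that the coherence isomorphisms are pulled back from $\I$.
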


\subsection{Construction of two infinite loop spaces}
Let $X$ be a commutative $\I$-monoid.
As explained in \cite{May1}, the classifying space of a permutative category is an 
$E_{\infty}$-space structured by an action of the Barratt-Eccles operad.  We have proved the next theorem.
\begin{theorem}\label{loop hocolim}
Suppose that $X:\I\to \Top$ is a commutative $\I$-monoid. Then the 
homotopy colimit 
$$
\uhocolim_{\I} X = B(\I \ltimes X)
$$ 
is an $E_{\infty}$-space.  
\end{theorem}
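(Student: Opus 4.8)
The plan is to read the statement off from the preceding Proposition together with the standard multiplicative infinite loop space machinery. By that Proposition, applied to the commutative $\I$-monoid $X$, the translation category $\I \ltimes X$ is a permutative (topological) category: the sum $\oplus$ is induced by $\mu$, the strict unit is the object $(\mathbf{0},\ast)$ supplied by $\eta$, and the symmetry isomorphisms are induced by the shuffles $\tau_{\m,\n}$. Thus the two things left to do are: (i) produce an $E_{\infty}$-action on the classifying space of a permutative category, and (ii) identify $B(\I \ltimes X)$ with $\uhocolim_{\I} X$.

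For (i) I would invoke \cite{May1}: the classifying space $B\Co$ of a permutative category $\Co$ carries a natural action of the Barratt--Eccles $E_{\infty}$-operad, and this action is functorial with respect to strict symmetric monoidal functors. Applying this with $\Co = \I \ltimes X$ gives the desired $E_{\infty}$-space structure on $B(\I \ltimes X)$, with basepoint the image of the unit object $(\mathbf{0},\ast)$. Functoriality moreover upgrades the assignment $X \mapsto B(\I \ltimes X)$ to a functor from commutative $\I$-monoids to $E_{\infty}$-spaces, which will be used in the applications.

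For (ii) I would use the identification recalled just before the statement. For any functor $Y\colon \Co \to \Top$ the nerve of the translation category $\Co \ltimes Y$ has space of $p$-simplices $\coprod_{c_0 \to c_1 \to \cdots \to c_p} Y(c_0)$, with face and degeneracy maps given by composing morphisms, dropping the outer object, or inserting identities, exactly as in the Bousfield--Kan simplicial replacement of $Y$; passing to geometric realizations yields a natural homeomorphism $B(\Co \ltimes Y) \cong \uhocolim_{\Co} Y$. Specializing to $\Co = \I$ and $Y = X$ gives $B(\I \ltimes X) \cong \uhocolim_{\I} X$, as stated.

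There is no genuine obstacle here — the theorem is a packaging of the Proposition and the cited results — but the one point requiring care is continuity: one must check that $\oplus$ and the symmetry isomorphisms assemble into continuous functors, so that $\I \ltimes X$ is a permutative category in the \emph{topological} sense and May's topological machine applies. This is immediate since these structure maps are built from the continuous maps $\mu_{\m,\n}$, the (discrete) injections of $\I$, and the shuffle permutations. As usual one works throughout in a convenient category of spaces, e.g.\ compactly generated weak Hausdorff spaces, so that the coproducts, products and geometric realizations entering the homotopy colimit commute as needed; with these conventions in place the proof is a direct assembly of the pieces above.
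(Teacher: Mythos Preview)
Your proposal is correct and matches the paper's approach exactly: the paper simply observes that, by the preceding Proposition, $\I \ltimes X$ is a permutative category and then invokes \cite{May1} to conclude that its classifying space $B(\I \ltimes X) = \uhocolim_{\I} X$ carries a Barratt--Eccles operad action. Your additional remarks on continuity and functoriality are accurate elaborations but are not spelled out in the paper's one-line argument.
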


Without further assumptions on $X$, this $E_{\infty}$-space need not be group-like 
(i.e. the monoid $\pi_0(\uhocolim_{\I} X)$ need not be a group).  However, we can always 
form the group completion $\Omega B (\uhocolim_{\I} X)$ to get the associated 
infinite loop space.
Note that an algebra over the Barratt-Eccles operad has an underlying monoid structure that is always strictly associative (and homotopy commutative) so that the usual functorial
construction of the classifying space for monoids built using the bar construction can be applied.  We will always use this model for $B$ in defining the group completion functor $\Omega B(-)$.  The consistency results in \cite{May1} guarantee that the group 
completion $\Omega B (\uhocolim_{\I} X)$ defines an infinite loop space weakly equivalent 
to that obtained using any other delooping machine.

In \cite{CS}, Schlichtkrull defined a different infinite loop space associated to $X$, 
using the language of $\Gamma$-spaces.  
Schlichtkrull's construction is the same 
as May's construction \cite{May2} 
of a $\Gamma$-space applied to the permutative 
category $\I \ltimes X$.  By the uniqueness result of \cite{May2}, the infinite 
loop space $\Omega B (\uhocolim_{\I} X)$ is equivalent to that defined by Schlichtkrull.


\bigskip

We now give a different construction of an infinite loop space associated 
to $X$. 
To start  note the decomposition of categories
$$
\P \ltimes X =\bigsqcup_{n\ge 0}\Sigma_{n}\ltimes X(\n),
$$ 
where $\Sigma_{n}$ is seen as a category with one object.   
Thus $\P \ltimes X$ is a topological category with classifying space 
\[
M:=\uhocolim_\P X =
B(\P\ltimes X)\simeq\bigsqcup_{n\ge 0}E\Sigma_{n}\times_{\Sigma_{n}}X(\n).
\]
As $\P\ltimes X$ is a permutative category, $M=B(\P \ltimes X)$ 
is an $E_{\infty}$-space and thus its group completion, $\Omega BM$, is an 
infinite loop space.  
The reduction maps $X(\n ) \to *$  define a map of permutative categories 
$ \P \ltimes X \to \P \ltimes *$ and hence
a map of infinite loop spaces
\[
\rho ^X: \Omega B (\uhocolim_\P  X)  \longrightarrow \Omega B (\uhocolim_\P
 *). 
\]
In particular, the homotopy fiber $\hofib \rho^X$ is naturally an infinite loop space.

When $X$ is a commutative $\I$-rig, we process the associated bipermutative category $\P \ltimes X$ using the machinery of Elmendorf and Mandell.  To a bipermutative category $\mathcal{C}$, they functorially associate a commutative symmetric ring spectrum  \cite[Cor 3.9, Thm 9.3.8]{EM}.  By \cite[Thm 4.6]{EM} and the original work of Segal \cite{Segal}, its underlying infinite loop space is weak homotopy equivalent to $\Omega B B\mathcal{C}$.  By a theorem due to Schwede \cite{Schwede}, and later refined by Mandell and May \cite[Sec.1]{MM}, the homotopy category of commutative symmetric ring spectra is equivalent to that of $E_{\infty}$ ring spectra.  We write $K\mathcal{C}$ for the $E_{\infty}$ ring spectrum associated to $\mathcal{C}$ under this equivalence of homotopy categories.  The underlying infinite loop space of an $E_{\infty}$ ring spectrum is an $E_{\infty}$ ring space, as defined in \cite[VI]{MQR}, so we may functorially associate to each bipermutative category an $E_{\infty}$ ring space $\Omega^{\infty}K\mathcal{C}$.  Moreoever, by \cite[Thm 1.2]{Lind}, the space $\Omega^{\infty}K\mathcal{C}$ is weak homotopy equivalent to the group completion $\Omega B B \mathcal{C}$.


We now apply this machinery to the morphism $\P\ltimes X \to \P \ltimes *$ of bipermutative categories.  We obtain a map of $E_{\infty}$ ring spectra 
\[
K(\P \ltimes X ) \longrightarrow K(\P \ltimes \ast)
\]
which is equivalent to $\rho^{X}$ after applying $\Omega^{\infty}$.  The homotopy fiber of a map of $E_{\infty}$ ring spectra is a non-unital $E_{\infty}$ ring spectrum.  By a non-unital $E_{\infty}$ ring space, we mean the underlying infinite loop space of a non-unital $E_{\infty}$ ring spectrum.  Since $\Omega^{\infty}$ preserves homotopy fiber sequences, this means that the homotopy fiber of a map of $E_{\infty}$ ring spaces is a non-unital $E_{\infty}$ ring space.  We have proved the next theorem.

\begin{theorem}\label{thm:hofib_ringspace}
For any commutative $\I$-monoid $X$ the homotopy fiber $\hofib \rho^X$ of 
\[
\rho ^X: \Omega B (\uhocolim_\P  X)  \longrightarrow \Omega B (\uhocolim_\P
 *). 
\]
is an infinite loop space.
If furthermore $X$ is a commutative $\I$-rig, then $\hofib \rho^X$ 
is a \john{non-unital} $E_\infty$-ring space.
\end{theorem}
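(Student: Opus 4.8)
The plan is to observe that $\rho^{X}$ is, by its very construction, the effect of an infinite loop space machine applied to a morphism of permutative categories — bipermutative when $X$ is a rig — and then to invoke two formal principles: the homotopy fiber of a map of spectra is again a spectrum, and $\Omega^{\infty}$ carries homotopy fiber sequences of spectra to homotopy fiber sequences of spaces. In this sense the theorem is essentially a packaging result, and no new homotopy theory needs to be developed.

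For the first assertion, the collection of reduction maps $X(\n)\to\ast$ is a morphism of commutative $\I$-monoids $X\to\ast$, so by the Proposition above it induces a morphism of permutative categories $\P\ltimes X\to\P\ltimes\ast$; applying the group completion machine (and using the consistency results of \cite{May1}) realizes $\rho^{X}$ as the underlying map of infinite loop spaces. It therefore suffices to note that the homotopy fiber of an infinite loop map is an infinite loop space. This is formal: writing $\rho^{X}\simeq\Omega^{\infty}g$ for a map of spectra $g\colon E\to E'$, the fiber sequence $\hofib(g)\to E\to E'$ of spectra is sent by $\Omega^{\infty}$ to a fiber sequence of spaces, so $\hofib\rho^{X}\simeq\Omega^{\infty}\hofib(g)$. (Equivalently, since $\hofib$ commutes with the loop functor, the levelwise homotopy fibers of a map of $\Omega$-spectra assemble into an $\Omega$-spectrum whose zeroth space is $\hofib\rho^{X}$.)

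For the second assertion, assume $X$ is a commutative $\I$-rig. By the Proposition, $\P\ltimes X$ and $\P\ltimes\ast$ are bipermutative categories and the functor between them is a map of bipermutative categories. Feeding this into the Elmendorf--Mandell machine and transporting from commutative symmetric ring spectra to $E_{\infty}$-ring spectra (via Schwede and Mandell--May), we obtain a map of $E_{\infty}$-ring spectra $K(\P\ltimes X)\to K(\P\ltimes\ast)$ whose underlying map of infinite loop spaces is $\rho^{X}$, where $\Omega^{\infty}K(-)$ is identified with the group completion $\Omega BB(-)$ by \cite[Thm 1.2]{Lind}. The essential input is then the general fact — the same one underlying the statement that $BU$ is the zero space of the fiber of $ku\to H\Z$ — that the homotopy fiber $F$ of a map of $E_{\infty}$-ring spectra is a non-unital $E_{\infty}$-ring spectrum. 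One way to see this: forgetting multiplicative units turns $K(\P\ltimes X)\to K(\P\ltimes\ast)$ into a map of non-unital $E_{\infty}$-ring spectra, and $F$ is its homotopy pullback against the zero object $0\to K(\P\ltimes\ast)$ computed in non-unital $E_{\infty}$-ring spectra; since the forgetful functor to spectra creates homotopy limits, the underlying spectrum of this pullback is the ordinary homotopy fiber, which is thereby equipped with a non-unital $E_{\infty}$-ring structure. (In our situation $K(\P\ltimes\ast)$ is the sphere spectrum, so $F$ is the augmentation ideal of $K(\P\ltimes X)$.) Applying $\Omega^{\infty}$, which preserves homotopy fiber sequences, identifies $\hofib\rho^{X}$ with $\Omega^{\infty}F$, the underlying infinite loop space of a non-unital $E_{\infty}$-ring spectrum — that is, a non-unital $E_{\infty}$-ring space.

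The remaining work is bookkeeping between models: verifying that the map output by whichever machine is used really is $\rho^{X}$, and that the comparison equivalences (\cite{May1}, \cite[Thm 4.6]{EM}, \cite[Thm 1.2]{Lind}, Schwede--Mandell--May) are compatible with forming homotopy fibers. The one point that is not purely formal — and which I would cite rather than reprove — is that the forgetful functor from non-unital $E_{\infty}$-ring spectra to spectra creates homotopy pullbacks, which is what supplies the non-unital ring structure on $\hofib\rho^{X}$; this is standard for algebras over an operad in a suitable model category of spectra, so I do not expect any genuine obstacle here.
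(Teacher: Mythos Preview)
Your proposal is correct and follows essentially the same approach as the paper: both arguments observe that $\rho^{X}$ is induced by a map of (bi)permutative categories, feed this through the Elmendorf--Mandell machinery (together with the comparison results of \cite{May1}, \cite{EM}, \cite{Lind}, and Schwede/Mandell--May) to obtain a map of $E_{\infty}$-ring spectra, and then invoke the formal facts that the homotopy fiber of such a map is a non-unital $E_{\infty}$-ring spectrum and that $\Omega^{\infty}$ preserves homotopy fiber sequences. You supply somewhat more detail than the paper does---in particular your justification for the non-unital ring structure on the fiber via creation of homotopy limits, and your remark that $K(\P\ltimes\ast)$ is the sphere spectrum so that $\hofib\rho^{X}$ is the augmentation ideal---but the skeleton of the argument is identical.
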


\subsection{The main example}\label{subsec:examples}
For any group $G$,
conjugation by $G$ or action by any other  automorphism of $G$ 
induces a well-defined action on 
$B_n(q, G)= \Hom (F_n/\Gamma_n^q, G)$ by post-composition.
The action is also compatible with the simplicial face and degeneracy maps
in the bar construction and hence induces an action on $B(q, G)$.

For every $q\ge 2$ we define an $\I$-space
$B(q,U(-))$  by setting
$\n\mapsto B(q,U(n))$ with morphisms induced by the natural
inclusions and the action of $\Sigma_{n}$ on $B(q,U(n))$ 
given by conjugation through permutation matrices. 
Being induced by the natural action of $\Sigma _n$ on $\n$, 
it can be checked that this compatible sequence defines indeed an $\I$-space.

We give $B(q, U(-))$ the structure of an $\I$-monoid by defining 
the unit map $\eta_\n \colon * \rightarrow B(q, U(n))$ to be the inclusion 
of the base-point and defining the monoid structure map
\[
\mu_{\n,\m}:B(q,U(n))\times B(q,U(m))\to 
B(q,U(n+m))
\]
to be induced by the block sum of matrices.
To see that $\mu_{\n, \m}$ is well-defined note that block sum defines 
a group homomorphism $U(n) \times U(m) \to U(n+m)$. 
When taking elements of the symmetric groups to permutation matrices, 
the disjoint union of sets corresponds to 
block sum of matrices. Thus  $\mu$ defines a natural transformation 
of functors defined on $\I\times \I$.  One checks compatibility with
$\tau$ and hence $B(q, U(-))$ is a 
commutative $\I$-monoid.

Next we note that tensor product of matrices induces a well-defined map
\[
\pi_{\n, \m}: B(q, U(n)) \times B(q, U(m)) \longrightarrow B(q, U(nm)).
\]
To see this note that tensor product commutes with matrix multiplication and
hence induces a homomorphism $U(n) \times U(m) \to U(nm)$.
The map is equivariant for the symmetric group actions 
because the permutation matrix associated to  the product of two permutations
 is the same as the tensor product of the corresponding 
permutation matrices. \john{Hence $\pi$ is a natural transformation of functors defined on the category $\P \times \P$.  Note, however, that $\pi$ is not natural for \ul{proper} injections.  The map $\pi$ is compatible with $\tau$ and the
distributivity of block sum and tensor product of matrices 
induces distributivity maps for $\mu$ and $\pi$.} We have shown

\begin{theorem}
$ B(q, U(-))$  is a commutative $\I$-rig.
\end{theorem}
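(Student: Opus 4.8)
The plan is to verify that $B(q,U(-))$ satisfies the three bundles of axioms making up the definition of a commutative $\I$-rig: (i) it is a commutative $\I$-monoid under block sum $\mu$; (ii) the tensor product $\pi$ is a natural transformation of functors on $\P\times\P$, with strict unit $1\in B(q,U(1))$, satisfying associativity, unit, and both distributivity axioms; and (iii) $\pi$ is commutative with respect to $\tau$ and $\tau^{\times}$. The bulk of (i) has already been carried out in the preceding paragraphs of this subsection, so I would simply cite that $B(q,U(-))$ is a commutative $\I$-monoid and concentrate on the multiplicative structure. The key point throughout is that each required identity already holds for the ambient unitary groups --- block sum and tensor product of matrices satisfy associativity, unitality, the interchange relation $\tau^{\times}$, and distributivity $\delta^{l},\delta^{r}$ on the level of $U(-)$ --- and that these identities are expressed by conjugation-equivariant group homomorphisms, hence restrict to the subspaces $B_{n}(q,U(n))\subset U(n)^{n}$ of nilpotent tuples and pass to geometric realization of the bar construction.

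First I would record why $\pi_{\n,\m}$ lands in $B(q,U(nm))$ and is $\P\times\P$-natural: tensor product $U(n)\times U(m)\to U(nm)$ is a group homomorphism, so it carries a tuple generating a subgroup of nilpotence class $<q$ to such a tuple, hence restricts to a map $B_{k}(q,U(n))\times B_{k}(q,U(m))\to B_{k}(q,U(nm))$ compatible with simplicial faces and degeneracies; realizing gives $\pi_{\n,\m}$. For naturality in $\P\times\P$ one uses that the permutation matrix of a product of permutations equals the tensor product (Kronecker product) of the permutation matrices, which is precisely the statement that $\sigma\otimes\sigma'$ acts on $\m\times\n$ by the product permutation $\tau$ realizes; since the only morphisms in $\P$ are isomorphisms, this is all that is needed. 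Next, the unit axiom: the element $1\in U(1)$ (the identity $1\times 1$ matrix) is fixed by $\Sigma_{1}$ and satisfies $1\otimes A=A=A\otimes 1$ after identifying $\1\times\n=\n=\n\times\1$, so $({\bf 1},1)$ is a strict unit for $\otimes$ on $\P\ltimes B(q,U(-))$. Associativity of $\pi$ follows from associativity of the Kronecker product together with the associativity isomorphism $(\m\times\n)\times\k\cong\m\times(\n\times\k)$ of $\I$, which at the level of matrices is realized by a permutation conjugation; one checks the pentagon-type diagram commutes because it does so in $U(-)$ and the realization functor is monoidal.

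Then I would treat distributivity. The block-sum/tensor identity $(A\oplus B)\otimes C=(A\otimes C)\oplus(B\otimes C)$, conjugated by the permutation matrix realizing $\delta^{l}_{\m,\n,\k}$, gives exactly the left-distributivity square in the definition of an $\I$-rig, once one unwinds that the left vertical composite $(1\times\tau\times 1)\circ(1\times 1\times\triangle)$ on $B(q,U(-))$-levels is the reshuffling $(A,B,C)\mapsto(A,C,B,C)$ needed to feed $\pi\times\pi$; right distributivity is the mirror-image statement $A\otimes(B\oplus C)=(A\otimes B)\oplus(A\otimes C)$ conjugated by $\delta^{r}_{\m,\n,\k}$. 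Finally, commutativity of $\pi$: the swap $A\otimes B\mapsto B\otimes A$ is conjugation by the permutation matrix of $\tau^{\times}_{\m,\n}$, which is the equivariance statement already used, so the relevant square commutes. I expect the main obstacle to be purely bookkeeping rather than conceptual: namely checking that all these matrix identities are genuinely compatible with the simplicial structure maps of the bar construction (so that they descend to $B(q,U(-))$ rather than merely to $\Hom(F_{k}/\Gamma_{k}^{q},U(-))$ levelwise) and keeping track of exactly which permutation conjugations implement $\tau^{\times}$, $\delta^{l}$, $\delta^{r}$ so that the diagrams in the definition commute on the nose rather than just up to homotopy. Since $\pi$ is only asserted to be natural on $\P\times\P$, no coherence needs to be checked against proper injections, which removes the one place where the argument could genuinely fail --- as the footnote already notes, $\pi$ is \emph{not} natural for proper injections, and indeed the interchange law for Kronecker products fails for non-square blocks, so the restriction to $\P$ is essential.
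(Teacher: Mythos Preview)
Your proposal is correct and follows essentially the same approach as the paper: the paper's argument is in fact the discussion immediately preceding the theorem statement, which verifies that $\pi$ is well-defined because tensor product is a group homomorphism $U(n)\times U(m)\to U(nm)$, that $\pi$ is $\P\times\P$-natural because the permutation matrix of a product of permutations equals the tensor product of the corresponding permutation matrices, that $\pi$ is compatible with $\tau$, and that distributivity of block sum and tensor product of matrices gives the distributivity axioms. Your write-up is somewhat more explicit about the unit and associativity axioms and about the passage to geometric realization, but the substance is the same.
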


As a consequence, we may apply Theorem \ref{loop hocolim} and 
Theorem \ref{thm:hofib_ringspace} to get a pair of \john{infinite loop spaces, the latter of which carries a non-unital $E_{\infty}$-ring structure.
In the next section, we will show that  these two infinite loop spaces are equivalent.}

\section{Identifying and comparing the infinite loop spaces}\label{identifying}
Let $X$ be a commutative $\I$-monoid. We 
will first identify $\hofib \rho^X$ under  certain assumptions and then show 
it is homotopy equivalent as an infinite loop space to
$\uhocolim_\I X$.

Consider the space
\[
X_{\infty} := \uhocolim_{n \in \N}X(\n).
\] 
Note that $X_\infty \simeq \ucolim _{n\in \N} X(\n)$ if the structural 
maps $j_n: X(\n) \to X(\n + {\bf 1})$ are cofibrations.
In our applications this will always be the case.
Let  $X_{\infty}^{+}$ denote
Quillen's plus construction applied with respect to the
maximal perfect  subgroup of $\pi_1 (X_{\infty})$ (which we take to be 
understood to be done in each component separately, 
if $X_\infty$ is not connected).
Also recall that a space $Z$ is abelian if $\pi_1 (Z)$ is abelian and acts
trivially on homotopy groups $\pi_* (Z)$.
It is well-known that $H$-spaces are abelian. 

\begin{theorem}\label{loop permutative}
Let $X:\I\to \Top$ be a commutative $\I$-monoid. Assume 
that 
\begin{itemize}
\item  the action of $\Sigma_{\infty}$ on $H_*(X_{\infty})$ is  
trivial;
\item  the inclusions induce natural isomorphisms $\pi_0 (X(\n )) \simeq \pi _0(X_\infty)$
of finitely generated abelian groups with multiplication
compatible with the  Pontrjagin product and in the centre of the homology Pontrjagin ring;
\item 
the commutator subgroup of $\pi_{1}(X_{\infty})$ is perfect (for each component)
 and 
$X_{\infty}^{+}$ is abelian.
\end{itemize}
Then 
$\hofib \rho^X \simeq X_{\infty}^{+}$, and in particular $X_\infty ^+$ is an infinite loop space.
\end{theorem}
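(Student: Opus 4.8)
The plan is to compare $\hofib\rho^X$ with $X_\infty^+$ by exploiting the two different bar-construction models for the homotopy colimit over $\P$ and over $\N$, and then invoking the group-completion theorem. First I would decompose $M = \uhocolim_\P X = \bigsqcup_{n\ge 0} E\Sigma_n\times_{\Sigma_n} X(\n)$ and note that the monoid structure coming from the permutative structure on $\P\ltimes X$ makes $\pi_0(M)$ the free commutative monoid on $\pi_0(X_\infty)$ adjoined with the grading variable; more precisely I want to understand $\pi_0(M)$ as $\bigsqcup_{n} \pi_0(E\Sigma_n\times_{\Sigma_n} X(\n))$, which by the triviality of the $\Sigma_\infty$-action on $\pi_0$ (second bullet) stabilizes. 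The key input is the group-completion theorem (Quillen, McDuff–Segal): $\Omega B M$ has homology equal to $H_*(M)[\pi_0(M)^{-1}]$, and under the hypotheses the localization is computed by inverting the single ``shift by one'' class, so that the basepoint component of $\Omega B M$ has the homology of $\mathbb{Z}\times_{\Sigma_\infty} $-style colimit, i.e.\ of $E\Sigma_\infty\times_{\Sigma_\infty} X_\infty$. Because the $\Sigma_\infty$-action on $H_*(X_\infty)$ is trivial (first bullet), $H_*(E\Sigma_\infty\times_{\Sigma_\infty} X_\infty)\cong H_*(B\Sigma_\infty)\otimes H_*(X_\infty)$.

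Next I would run the same analysis for the trivial $\I$-rig $*$: here $\Omega B(\uhocolim_\P *) = \Omega B\big(\bigsqcup_n B\Sigma_n\big) = \mathbb{Z}\times B\Sigma_\infty^+ = \mathbb{Z}\times Q_0(S^0)$ by Barratt–Priddy–Quillen. The map $\rho^X$ on the zeroth space is, up to the group-completion equivalence, induced by the collapse $X(\n)\to *$, hence on homology of the base component it is the projection $H_*(B\Sigma_\infty)\otimes H_*(X_\infty)\to H_*(B\Sigma_\infty)$. To identify the homotopy fiber I would first reduce to the $0$-component: $\rho^X$ is a map of infinite loop spaces over $\pi_0$, and on $\pi_0$ it is an isomorphism (using the second bullet, that $\pi_0(X(\n))\cong\pi_0(X_\infty)$ compatibly), so $\hofib\rho^X$ is connected and equals the fiber over the base component. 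Then I would show this fiber has the homology of $X_\infty$: using that $B\Sigma_\infty^+$ is an infinite loop space and the fibration $\hofib\rho^X \to (\Omega B M)_0 \to (\Omega B(\uhocolim_\P *))_0 = B\Sigma_\infty^+$ is a fibration of infinite loop spaces, the Serre/Eilenberg–Moore spectral sequence collapses because the homology of the total space splits as a tensor product over the base homology — giving $H_*(\hofib\rho^X)\cong H_*(X_\infty)$. Finally, $\hofib\rho^X$ is an $H$-space hence abelian, and $X_\infty^+$ is abelian by the third bullet, and there is a natural map $X_\infty \to (\Omega B M)_0$ lifting to a map $X_\infty\to \hofib\rho^X$ (since the composite to $B\Sigma_\infty^+$ is null); this map is a homology isomorphism onto the fiber, so it induces $X_\infty^+\xrightarrow{\simeq}\hofib\rho^X$ by the universal property of the plus construction, using that the commutator subgroup of $\pi_1(X_\infty)$ is perfect so $X_\infty^+$ is defined and the map kills it.

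The main obstacle, and the place I would spend the most care, is the group-completion step: verifying that the Pontryagin ring $H_*(M)$ is, in the relevant degrees, obtained from $H_*(X_\infty)$ by tensoring with $H_*(B\Sigma_\infty)$ and that inverting $\pi_0$ amounts to inverting the single shift class sitting in the centre of the ring — this is exactly what the hypothesis ``$\pi_0(X(\n))$ finitely generated, multiplication compatible with the Pontryagin product and central'' is designed to guarantee, but making the centrality argument precise (so that the localization $H_*(M)[\pi_0^{-1}]$ is a left = right = two-sided localization and computes telescope homology) requires the McDuff–Segal formalism applied componentwise. A secondary subtlety is the splitting of the Serre spectral sequence of the infinite-loop fibration over $B\Sigma_\infty^+$; one should argue this via the stable splitting (the fibration of infinite loop spaces is, after one delooping, a split cofiber/fiber sequence up to homology because the base is a wedge summand), rather than by a brute-force spectral sequence computation. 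Once the homology identification is in place, the passage to a genuine homotopy equivalence is formal from the plus-construction universal property together with the abelianness hypotheses, and the infinite-loop refinement follows because all maps in sight are maps of infinite loop spaces (the equivalence $\hofib\rho^X\simeq\uhocolim_\I X$ claimed in the theorem statement is then handled separately using May's and Schlichtkrull's comparison already recalled in the excerpt).
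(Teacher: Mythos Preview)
Your opening moves match the paper's: apply the group-completion theorem to $M=\uhocolim_{\P}X$, identify the telescope with $\Z\times(E\Sigma_\infty\times_{\Sigma_\infty}X_\infty)$, and study the map down to $\Z\times B\Sigma_\infty^+$. The gap is in how you extract $X_\infty^+$ as the fibre. You assert $H_*(E\Sigma_\infty\times_{\Sigma_\infty}X_\infty)\cong H_*(B\Sigma_\infty)\otimes H_*(X_\infty)$ from triviality of the $\Sigma_\infty$-action on $H_*(X_\infty)$, and then claim a Serre/Eilenberg--Moore spectral sequence collapses. The first assertion is unjustified: trivial action on fibre homology gives untwisted coefficients on the $E_2$-page of the Borel--Serre spectral sequence, but does nothing to kill differentials. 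The section coming from the unit $*\to X$ does yield a product decomposition $(\Omega BM)_0\simeq B\Sigma_\infty^+\times\hofib\rho^X$ of infinite loop spaces, hence $H_*(E\Sigma_\infty\times_{\Sigma_\infty}X_\infty)\cong H_*(B\Sigma_\infty)\otimes H_*(\hofib\rho^X)$; but to conclude $H_*(\hofib\rho^X)\cong H_*(X_\infty)$ from this you still need the unjustified K\"unneth formula for the Borel construction, so the argument is circular. Your suggested ``stable splitting'' fix presumes exactly what is to be shown about the fibre.

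The paper closes this gap with a single citation: Berrick's theorem on plus constructions and fibrations. For $X_\infty\to E\Sigma_\infty\times_{\Sigma_\infty}X_\infty\to B\Sigma_\infty$, the hypotheses that $\pi_1$ of the base acts trivially on $H_*$ of the fibre and that $X_\infty^+$ is nilpotent guarantee that applying the plus construction termwise preserves the fibre sequence, giving $X_\infty^+\simeq\hofib(p^+)\simeq\hofib\rho^X$ directly, with no spectral-sequence computation. Your collapse claims amount to an attempt to reprove this special case of Berrick's result without the tools to do so. (Two smaller issues: $\hofib\rho^X$ need not be connected---consider $X=B(q,O(-))$, where $\pi_0(X_\infty)\cong\Z/2$---and the theorem as stated does not assert $\hofib\rho^X\simeq\uhocolim_{\I}X$; that is a separate comparison carried out later in the paper.)
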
 

\begin{proof}
Let $ M = \uhocolim _\P X= B(\P\ltimes X)$ and $m$ 
be the point corresponding to the base 
point in $X({\1})$ (in the identity component of $\pi_0 (X(\1))$). 
Then 
\[
\text{Tel}(M\stackrel{\cdot m}
{\rightarrow} M\stackrel{\cdot m}
{\rightarrow} M\stackrel{\cdot m}
{\rightarrow} \cdots)\simeq \Z\times (E\Sigma_{\infty}\times_{\Sigma_{\infty}}X_{\infty}).
\]
As $\P \ltimes X$ is a symmetric monoidal category, its classifying space $M$ is a 
homotopy commutative topological monoid.
The hypotheses imply that 
$
\pi_0 (M) 
$ 
is in the centre of $H_*(M)$. Hence $H_*(M)[\pi_0(M)^{-1}]$ can be constructed by 
right fractions, so that we may apply the group completion theorem \cite{MS, RW}.  Therefore there is a map 
\[
f:\Z\times (E\Sigma_{\infty}\times_{\Sigma_{\infty}}X_{\infty})\to \Omega BM
\]
which induces an isomorphism on homology with all systems of local coefficients on $\Omega BM$.  
Furthermore, the fundamental group 
(of each component) of $E\Sigma_{\infty} \times_{\Sigma_{\infty}} X_{\infty}$ 
has a perfect commutator subgroup by \cite{RW}, 
and $f$ extends to a homology equivalence
between abelian spaces  
\[
{f}^+:\Z\times  (E\Sigma_{\infty}\times_{\Sigma_{\infty}}X_{\infty})^{+}\to 
\Omega BM,
\]
which is thus a homotopy equivalence.
This shows in particular that $\Z\times  (E\Sigma_{\infty}\times_{\Sigma_{\infty}}X_{\infty})^{+}$
is an infinite loop space as $\Omega BM$ is  the group completion of an
$E_\infty $-space.

Consider now the 
fibration sequence 
\begin{equation}\label{ses1}
X_{\infty}\to E\Sigma_{\infty}\times_{\Sigma_{\infty}}X_{\infty}\stackrel{p}{\to} B\Sigma_{\infty}.
\end{equation}
and the associated map of plus constructions
\[
p^{+}:\Z\times  (E\Sigma_{\infty}\times_{\Sigma_{\infty}}X_{\infty})^{+}\to 
\Z\times B\Sigma_{\infty}^{+}.
\]
Since $f^+$ is a  homotopy equivalence and $\Omega B(\uhocolim_\P *) \simeq
\Z \times B\Sigma_\infty^+$, we can identify the  homotopy fiber of
$p^+$ with  $\hofib \rho^ X$.
By assumption the action of  $\Sigma_{\infty}$ on $X_{\infty}$ is homologically trivial.
We are also assuming that $X_{\infty}^{+}$ is abelian and in particular nilpotent.
Under these conditions the fibre sequence (\ref{ses1}) remains 
a fibre sequence after passing to  plus constructions, see \cite[Thm. 1.1]{Berrick}. 
Thus we have a homotopy fibration
\[
X_{\infty}^{+}\to \Z \times (E\Sigma_{\infty}\times_{\Sigma_{\infty}}X_{\infty})^{+}
\to \Z\times B\Sigma_{\infty}^{+}.
\]
This shows that the homotopy fibre of $p^{+}$ is  $X_{\infty}^{+}$ and so $X^+_\infty \simeq
\hofib \rho^X$.
\end{proof}

\begin{remark}
For any commutative $\I$-monoid
$X$ the multiplication on $M_X:= \bigsqcup _{n\geq 0} X(\n)$
is commutative up to the action of the shuffle maps $\tau_{\m, \n}$.
These are induced by the action of the symmetric group. So, assuming that 
these actions 
are trivial in homology, 
it follows that the Pontrjagin product is commutative on the 
level of homology. In particular $\pi_0 (M_X)$ is in the centre of the 
Pontrjagin ring $H_* (M_X)$. Thus by the group completion theorem \cite {MS}, 
the map 
$$
\mathbb Z \times X_\infty \longrightarrow \Omega B (M_X)
$$ 
is a homology
isomorphism.
In recent work \cite {Gritschacher}, Gritschacher has shown that without any further assumption, 
the commutator subgroup of $\pi _1 (X_\infty)$ is always perfect 
and that $X^+_\infty$ is always an abelian space. In other words, the 
assumptions in Theorem 3.1 on $\pi_1 (X_\infty)$ and $X^+_\infty$ are 
actually consequences\footnote{As we do not know whether $M_X$ is homotopy commutative, 
the results of
\cite{RW} cannot be applied directly to conclude that the induced map
$\mathbb Z \times X^+_\infty \to \Omega B (M_X)$  is a homotopy
equivalence.}. 

In contrast, the condition that the symmetric groups act 
homologically trivial is necessary. To see this consider 
the commutative $\I$-space $X$ with $X(\n):= Z^n$ for some pointed connected space $Z$.  
Then, by the parametrized version of the 
Barratt-Priddy-Quillen theorem (see for example \cite{May1}, \cite {Segal}), 
$$\Omega B( \uhocolim _\P X) \simeq Q(Z_+)$$ 
and thus $\hofib \rho^X 
\simeq \hofib p^+ \simeq Q( Z)$ while $X_\infty \simeq \uhocolim _n Z^n$.
Here $Q = \Omega ^\infty \Sigma ^\infty$ and $Z_+$ denotes the space $Z$ with an additional base point.  
\end{remark}

We now turn to the question of comparing the infinite loop spaces $\hofib \rho^X $ and $\uhocolim_{\I} X$.
Suppose that $X$ is a commutative $\I$-monoid.
Consider the 
following commutative diagram of strict functors between 
permutative categories
\[
\xymatrix{
\P \ltimes X\ar[r]^-{\alpha_{X}} \ar[d]_{\rho^X} 
& \I\ltimes X \ar[d]^{\rho^X_1} \\
\P \ltimes *\ar[r]^-{\alpha_{*}} &\I\ltimes *. 
}
\]
The horizontal maps are induced by the inclusion $\P \to \I$.
In the above diagram $*$ is  the terminal commutative $\I$-monoid 
and the vertical maps $\rho^X$ and $\rho^X_1$  are induced by  
the projection maps to a point. 
Passing to the level of classifying spaces and applying group completion 
we obtain a commutative diagram of infinite loop spaces 
\begin{equation}
\xymatrix{
\Omega B(\uhocolim _\P  X) \ar[r]^-{ \alpha_{X}} \ar[d]_{\rho^X} 
& \Omega B(\uhocolim_{\I} X) \ar[d]^{\rho^X_1} \\
\Omega B (\uhocolim_\P *)\ar[r]^-{\alpha_{*}} & \Omega B (\uhocolim_{\I} *)\simeq *. 
}
\end{equation}
Note that the empty set  is an initial object for  $\I$ and hence $\uhocolim_\I * = B\I \simeq *$.

The above 
diagram induces an infinite loop map between the homotopy fibers of the maps 
$\rho^X$ and $\rho^X_1$. 
By definition the homotopy fiber on the left  is the space $\hofib \rho^ X$.
Also, since $\uhocolim_{\I} *$ is contractible, the homotopy fiber 
on the right can be identified with $\Omega B (\uhocolim_{\I}X)$. 
This shows that we 
have a 
 map of infinite loop spaces
\[
\hofib \rho^X \overset g\longrightarrow \Omega B (\uhocolim_{\I} X).
\]
Note that $\rho^X$ has a canonical splitting of permutative categories induced by the unit $* \to X$ of the $\mathbb I$-monoid $X$.
Thus it follows from the following theorem that $g$ is a homotopy equivalence 
whenever the stated conditions on $X$ are satisfied.

\begin{theorem}\label{thm:compare_g}
Let $X$ be a commutative $\mathbb I$-monoid 
such that 
all maps $j: X(\n) \to X(\m)$ induced by injections $j: \n \to \m$
are  monomorphisms. 
Furthermore, assume that for all $x \in X(\n)$ and $y\in X(\m)$,
the sum $\mu_{\n, \m} (x,y) $ is in the image 
of a map induced by a non-identity order preserving injection if and only if $x$ or $y$ is. 
Then 
$$
\alpha_X \times \rho^X: 
\Omega B (\uhocolim _{\mathbb P} X ) 
\longrightarrow 
\Omega B( \uhocolim_{\mathbb I} X) \times \Omega B (\uhocolim _{\mathbb P} *)
$$
is a weak homotopy equivalence of infinite loop spaces 
which is natural for commutative $\mathbb I$-monoids.
\end{theorem}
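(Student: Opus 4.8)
The plan is to turn the statement into a homology computation and then bring in the two hypotheses on $X$.

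All of the spaces here are group completions of the $E_\infty$-spaces $\uhocolim_{\P} X$, $\uhocolim_{\I} X$ and $\uhocolim_{\P}*$ attached to permutative categories, hence are infinite loop spaces, in particular simple spaces, and $\alpha_X\times\rho^X$ is an infinite loop map; so it suffices to prove it is an isomorphism on integral homology. First I would verify that the group completion theorem \cite{MS, RW} applies on each factor. For any commutative $\I$-monoid $Y$ the shuffle permutations $\tau_{\m,\n}$ act trivially on $H_*\bigl(\uhocolim_{\P}Y\bigr)=H_*\bigl(\coprod_n E\Sigma_n\times_{\Sigma_n}Y(\n)\bigr)$, because on a homotopy orbit space $E\Sigma_k\times_{\Sigma_k}(-)$ the action of a fixed permutation is homotopic to the identity; hence the Pontryagin ring is commutative, $\pi_0$ is central, and $H_*\bigl(\Omega B\,\uhocolim_{\P}Y\bigr)\cong H_*\bigl(\uhocolim_{\P}Y\bigr)[\pi_0^{-1}]$. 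The same holds for $\uhocolim_{\I}X$, whose $\pi_0$ is central because it is an $E_\infty$-space, and applied to $*$ this is the Barratt--Priddy--Quillen identification $H_*\bigl(\Omega B\,\uhocolim_{\P}*\bigr)=H_*(\Z\times B\Sigma_\infty)$. Using that $\mu(x,\eta_{\1})=j_n(x)$, so that $\N$-stabilization of $X$ agrees with $\oplus\,[\1,\eta_{\1}]$, one sees that $(\pi_0\,\uhocolim_{\I}X)^{\textup{gp}}=(\pi_0\,\uhocolim_{\P}X)^{\textup{gp}}/\Z[\1,\eta_{\1}]$; since $\rho^X$ records exactly the summand $\Z=(\pi_0\,\uhocolim_{\P}*)^{\textup{gp}}$ generated by $[\1,\eta_{\1}]$, the map $\alpha_X\times\rho^X$ is already an isomorphism on $\pi_0$ after group completion. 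What remains is to show it induces an isomorphism
\[
H_*\bigl(\uhocolim_{\P}X\bigr)[\pi_0^{-1}]\ \xrightarrow{\ \cong\ }\ H_*\bigl(\uhocolim_{\I}X\bigr)[\pi_0^{-1}]\otimes H_*(\Z\times B\Sigma_\infty).
\]

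This is where the hypotheses enter, and I expect it to be the main obstacle. I would study $\uhocolim_{\I}X=\ucolim_n\uhocolim_{\I_n}X$ through the filtration by the full subcategories $\I_n\subset\I$ on $\{\0,\dots,\n\}$, using the factorization of an injection as a permutation followed by an order-preserving injection. The assumption that every structure map $X(\n)\to X(\m)$ is a monomorphism identifies the $n$-th latching object with the $\Sigma_n$-subspace $L_n\subseteq X(\n)$ of points lying in the image of a non-identity order-preserving injection, and exhibits $\uhocolim_{\I_n}X$ as the homotopy pushout of $\uhocolim_{\I_{n-1}}X$ along the latching map and $E\Sigma_n\times_{\Sigma_n}L_n\hookrightarrow E\Sigma_n\times_{\Sigma_n}X(\n)$ (after first replacing $X$ so that its structure maps are cofibrations, which does not change homotopy types). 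The second hypothesis says precisely that the ``genuine'' elements $X(\n)\setminus L_n$ are multiplicatively independent of the stabilized ones: $\mu$ carries genuine$\,\times\,$genuine into genuine, and carries a product into some $L_{m+n}$ as soon as one factor lies in some $L$. Feeding this into the filtration, the spectral sequence computing $H_*(\uhocolim_{\I}X)$ should collapse onto the contribution of the genuine classes, and comparison with the telescope model of $H_*(\uhocolim_{\P}X)[\pi_0^{-1}]$ then yields the displayed splitting, with the $B\Sigma_\infty$-factor being exactly the part retained by $\rho^X$. A more structural variant: use the two hypotheses to filter $X$ by the $\I$-submonoids generated by its genuine elements of level $\le k$; the second hypothesis forces each stage to be the coproduct of the previous one with the free commutative $\I$-monoid on the based $\Sigma_k$-space $X(\k)/L_k$, and for free commutative $\I$-monoids the statement is the parametrized Barratt--Priddy--Quillen theorem of the Remark, giving $\Omega B\,\uhocolim_{\P}(\text{free on }Z)\simeq Q(Z_+)\simeq Q(Z)\times\Omega^\infty\S$ with the two factors $\Omega B\,\uhocolim_{\I}(\text{free on }Z)$ and $\Omega B\,\uhocolim_{\P}*$; one then induces over the filtration, using that $\uhocolim_{\I}$ sends such coproducts to products and that $\uhocolim_{\P}$, after group completion, sends them to the corresponding fibre products over $\Omega^\infty\S$.

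Granting the splitting, $\alpha_X\times\rho^X$ is a homology isomorphism between simple spaces, hence a weak homotopy equivalence; it is an infinite loop map by construction, and it is natural in $X$ because $\uhocolim_{\P}$, $\uhocolim_{\I}$, $\rho$ and the group completion functor are all functorial. The hard part is the middle paragraph: verifying that the two hypotheses on $X$ force the filtration spectral sequence to degenerate with the expected associated graded, with no hidden extensions or extra classes. This is the one place where both conditions are used, and it comes down to the bookkeeping that the genuine elements of $X$ are freely adjoined over the stabilized ones.
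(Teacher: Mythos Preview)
Your approach diverges from the paper's and has a genuine gap at the step you yourself flag as the hard part. The paper does not compute homology or run a spectral sequence. Instead it argues categorically, first for discrete $X$: using the monomorphism hypothesis, every $x\in X(\n)$ has a unique factorization $x=j_x(\bar x)$ with $j_x$ order-preserving and $\bar x\in X(\bar\n)$ \emph{reduced} (not in the image of any proper injection). This yields a $\P$-diagram $\bar X$, and the hypothesis on $\mu$ makes the assignment $(\n,x)\mapsto(\bar\n,\bar x)$ into a strict monoidal retraction $R_X\colon\I\ltimes X\to\P\ltimes\bar X$ which is a deformation retract on classifying spaces. Separately, the functor $T_X\colon(\P\ltimes\bar X)\times\P\to\P\ltimes X$ sending $((\bar\n,\bar x),\m)$ to $(\bar\n+\m,j(\bar x))$ is an equivalence of groupoids with strict monoidal left inverse. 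Sandwiching $\alpha_X\times\rho^X$ between $T_X$ and $R_X\times\mathrm{id}$ gives the endofunctor $((\bar\n,\bar x),\m)\mapsto((\bar\n,\bar x),\bar\n+\m)$ of $(\P\ltimes\bar X)\times\P$, the identity on the first factor and a translation on the second, hence an equivalence after group completion. The passage to topological $X$ goes through singular simplices levelwise. No localization of homology rings, no filtration spectral sequence, no comparison of associated gradeds is needed.

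Your spectral-sequence plan never reaches this level of precision: ``should collapse onto the contribution of the genuine classes'' is an aspiration, not an argument, and you do not identify which differentials vanish or why the resulting associated graded realizes the tensor decomposition \emph{via the map} $\alpha_X\times\rho^X$. Your structural alternative has a concrete error: the successive stages of the filtration by $\I$-submonoids generated by genuine elements of level $\le k$ are \emph{not} obtained by freely adjoining $X(\k)/L_k$. Already for $X(\n)=BU(n)$, the sub-$\I$-monoid generated by $BU(1)$ has at level $n$ only the image of $BU(1)^n\to BU(n)$, a proper quotient of the free object $BU(1)^n$; and adjoining the level-$2$ genuine elements does not do so freely, since $BU(2)$ carries relations among products of reduced elements that are not formal consequences of the $\I$-monoid axioms. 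The hypothesis on $\mu$ says only that reduced times reduced is reduced; it imposes no freeness on the multiplication among reduced elements, which is precisely what your inductive step would need. The paper's reduction functor $R_X$ sidesteps all of this by never requiring $X$ to be free on anything.
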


\john{Notice that when $X$ is a commutative $\I$-rig, we may use the theorem to transfer the non-unital $E_{\infty}$-ring space structure on $\hofib \rho^{X}$ along $g$ to obtain a non-unital $E_{\infty}$-ring space structure on the group completion of $\uhocolim_{\I} X$.}

A version of the theorem was proved 
by Fiedorowicz and Ogle \cite  {FiedoroOgle} in the 
setting of simplicial sets. 
This was revisited in Gritschacher \cite [Section 4]  {Gritschacher}. 
For  convenience of the reader we sketch a streamlined argument following
\cite {Gritschacher}.

\begin{proof}
Given $x\in X(\n)$ we can write it as $x= j_x(\bar x)$ where
$\bar x \in X(\bar \n)$, $j_x: \bar \n  \to \n $ is an order preserving 
injection,  
and $\bar n$ is minimal. 
We call $ x$ reduced if $x=\bar x$. Note that 
$\bar x$ and $j_x$ are uniquely determined. Denote by $\bar X(\n)$ 
the set of reduced elements in $X(\n)$. The assignment $\n \mapsto \bar X (\n)$
defines a $\mathbb P$-diagram. 
By the assumption on $\mu$ the commutative 
$\mathbb I$-monoid structure of $X$ induces the structure of a permutative category 
on $\mathbb P \ltimes \bar X$. 

Assume now that  $X$ is discrete. Then the assignment
$(\n, x) \mapsto (\bar \n , \bar x)$ on objects extends to define 
a 
functor 
$$R_X: \mathbb I\ltimes X \longrightarrow \mathbb P\ltimes \bar X.
$$ 
It has a right 
inverse given by the inclusion $\iota_X: \mathbb P \ltimes \bar X \to
\mathbb I \ltimes X$. 
Furthermore, the maps $j_x$ define a natural transformation from
$\iota_X \circ R_X$ to the identity on $\mathbb I \ltimes X$. 
Hence, $R_X$ defines a homotopy deformation
retract 
on classifying spaces. We also note that by our assumption on $\mu$, the functor $R_{X}$ is a strict symmetric monoidal functor.

The inclusions $\mathbb P \ltimes \bar X \to \mathbb P \ltimes X$ and $\mathbb P
\to \mathbb P \ltimes X$ combine via the monoidal product  functor 
to a functor 
$$
T_X: (\mathbb P \ltimes \bar X) \times \mathbb P \longrightarrow \mathbb P \ltimes X
$$
that maps the object $((\bar \n, \bar x), \n)$ to 
$(\bar \n + \n, j(\bar x))$, where $j$ is the 
canonical inclusion $\bar \n 
\hookrightarrow \bar \n + \n$. We claim this is a homotopy equivalence on classifying spaces. Indeed, an analysis of the effect of permutations on reduced points shows that the functor is bijective on automorphism groups of objects.  As both 
source and target categories are groupoids and every isomorphism class 
of the target category 
has a representative in the image, this is an equivalence of categories.
We note that $T_X$ is not a strict monoidal functor (only up to conjugation by a block permutation). However, the left inverse functor $(\n, x) \mapsto ((\bar \n, \bar x), 
\n - \bar\n)$ does commute strictly with the monoidal structure.
Hence, this defines a homotopy equivalence of monoids on classifying spaces, and induces a homotopy equivalence of  group completions.
Compare \cite [Lemma 1.7]{FiedoroOgle}.

Consider now the map of permutative categories
$$
\alpha_X \times \rho ^X: \mathbb P \ltimes X \longrightarrow (\mathbb I \ltimes X) \times \mathbb P
$$
and take the group completion of their classifying spaces
\begin{equation}\label{eq:claim_map}
\alpha_X \times \rho ^X: \Omega B (B(\mathbb P \ltimes X)) \longrightarrow 
\Omega B (B(\mathbb I \ltimes X)) \times \Omega B (B \mathbb P).
\end{equation}
We claim that this 
is a weak homotopy equivalence which is natural in
commutative $\mathbb I$-monoids. To see this precompose with the map of group completed classifying spaces induced by $T_{X}$ and postcompose with the map induced by $R_{X} \times \mathrm{Id}$.  The resulting composite is homotopic to the endofunctor of $(\mathbb P \ltimes \bar X) \times \mathbb P$ given by 
\[
((\bar \n, \bar x), \m) \longmapsto ((\bar \n, \bar x), \bar \n + \m).
\]
This map is the identity on the first component and an equivalence on the second component because we are working with group-complete monoids.  


Using the naturality of the weak homotopy equivalence in \eqref{eq:claim_map} and applying it to 
boundary and face maps allows us to extend it to
$\mathbb I$-diagrams in 
simplicial sets. More precisely, for any commutative $\mathbb I$-monoid
$X$ 
in simplicial
sets, that satisfies level wise the condition on $\mu$, 
we have a map of simplicial permutative categories
which is a weak homotopy equivalence on applying $\Omega B (B(-))$ to each simplicial level, and hence a weak homotopy equivalence
on total  
spaces: 
$$
\alpha_X \times \rho^X: 
| \n \mapsto \Omega B (B  (\mathbb P \ltimes  X(\n)) )|
\simeq |\n \mapsto \Omega B (B(\mathbb I \ltimes X(\n))) \times \Omega B (B \mathbb P )| .
$$
As $\Omega$ commutes with Cartesian product, and as $|\n \mapsto \Omega Z(\n)|
\simeq \Omega | \n \mapsto  Z(\n) |$ whenever each $Z(\n)$ is connected (cf. \cite[12.3]{May}),
we also have
$$
\alpha_X \times \rho^X: 
\Omega | \n \mapsto  B (B  (\mathbb P \ltimes  X(\n)) )|
\simeq \Omega |\n \mapsto  B (B(\mathbb I \ltimes X(\n))) \times B (B \mathbb P )| .
$$
Furthermore,  as realizations of multi-simplicial sets can be taken in any order, we deduce that
$$
\alpha_X \times \rho^X:\Omega B (B  (\mathbb P \ltimes |\n \mapsto  X(\n)|) )
\simeq \Omega B (B(\mathbb I \ltimes |\n \mapsto X(\n)|)) \times \Omega B (B \mathbb P ) .
$$
Compare \cite [Lemma 1.8] {FiedoroOgle}.
Finally, by replacing every space by its singular simplicial set,
any $\mathbb I$-diagram $X$ in topological spaces gives rise to 
an $\mathbb I$-diagram in simplicial sets, taking commutative $\mathbb I$-monoids to simplicial ones. 
Note that the conditions on $\mu$ 
are pointwise conditions and are automatically satisfied by the singular $p$-simplices for each $p$.
As a space is weakly homotopy equivalent
to the realisation of its singular simplicial 
set, the theorem follows.     
\end{proof}

\begin{example}
Consider 
the commutative $\I$-space $X$ with $X(\n):= Z^n$, where $Z$ is a well-pointed 
connected space. 
Note that in this case $ \Sigma _n$
does not act trivially on $H_* (Z^n)$ and hence 
Theorem 3.1 does not apply. 
As before, by the parametrized version of the Barratt-Priddy-Quillen theorem, 
$$
\Omega B (\uhocolim _\P X) \simeq Q(Z_+) \simeq Q(\S^0) \times Q(Z)
$$  
and hence $\hofib \rho^X \simeq Q(Z)$. 
Thus, by the above theorem,
we also have $\uhocolim _\I X \simeq Q(Z)$, which is in agreement with 
a result of Schlichtkrull \cite {CS2}.
\end{example}

\section{Constructing filtrations by infinite loop spaces}\label{applications}

In this section we use the results obtained in the previous sections to produce
filtrations of classical infinite loop spaces by sequences of infinite loop spaces
arising from the descending central series of the free groups.

\begin{theorem}\label{cor infinite loop spaces}
The spaces $B(q, U)$, $B(q,SU)$,  $B(q,SO)$ 
$B(q,O)$ 
and  $B(q, Sp)$ provide a 
filtration by \john{non-unital} $E_\infty$-ring spaces of the classical \john{non-unital} $E_\infty$-ring spaces 
$BU$, $BSU$, $BSO$,
$BO$ 
and $BSp$ respectively. 
\end{theorem}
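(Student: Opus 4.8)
The plan is to realize each $B(q,G)$ as $\hofib\rho^{X}$ for the commutative $\I$-rig $X=B(q,G(-))$, and to assemble the filtration already at the level of $\I$-rigs. First I would check that the $\I$-space $\n\mapsto B(q,G(n))$ is a commutative $\I$-rig for $G=U,SU,SO,O$ and $Sp$, by exactly the argument that proves this for $U$: block sum of matrices supplies the additive transformation $\mu$ and tensor product of matrices the multiplicative transformation $\pi$, both equivariant for the permutation actions and distributive over one another. In the cases $SU$ and $SO$ one observes in addition that the determinant of a block sum, or of a tensor product, of determinant-one matrices is again $1$; the case $Sp$ is the one where producing the multiplicative structure takes the most care. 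Next, the canonical surjections $F_n/\Gamma_n^{q+1}\twoheadrightarrow F_n/\Gamma_n^{q}$ and $F_n\twoheadrightarrow F_n/\Gamma_n^{q}$ induce, by precomposition, levelwise closed inclusions of simplicial spaces
\[
B_{\bullet}(q,G(n))\hookrightarrow B_{\bullet}(q+1,G(n))\hookrightarrow\cdots\hookrightarrow B_{\bullet}(\infty,G(n))=B_{\bullet}G(n)
\]
compatible with the permutation actions, with block sum and with tensor product. Hence they give a tower of maps of commutative $\I$-rigs ending in $B(\infty,G(-))$, where $B(\infty,G(-))(\n)=BG(n)$. Applying the functorial chain ``commutative $\I$-rig $\leadsto$ bipermutative category $\leadsto$ $E_\infty$-ring spectrum $\leadsto$ homotopy fiber'' together with Theorem~\ref{thm:hofib_ringspace} turns this into a tower of maps of non-unital $E_\infty$-ring spaces $\hofib\rho^{B(q,G(-))}$.

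The second step is to identify $\hofib\rho^{B(q,G(-))}$ with $B(q,G)$ itself, using Theorem~\ref{loop permutative} applied to $X=B(q,G(-))$ (and to $X=B(\infty,G(-))$). Since the structure maps are cofibrations, $X_\infty=\ucolim_n B(q,G(n))=B(q,G)$. Of the three hypotheses of Theorem~\ref{loop permutative}, the $\pi_0$-condition is vacuous because each $B(q,G(n))$, being the realization of a simplicial space with a single point in degree zero, is connected; the conditions on $\pi_1(X_\infty)$ and on $X_\infty^+$ being abelian hold automatically by Gritschacher's results recalled in the Remark after that theorem; and what remains is the assertion that $\Sigma_\infty$ acts trivially on $H_*(B(q,G))$, which is the substantive input and to which I return below. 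Theorem~\ref{loop permutative} then gives $\hofib\rho^{B(q,G(-))}\simeq B(q,G)^{+}$, and since $\pi_1 B(q,G)$ has no nontrivial perfect subgroup (indeed it is abelian, cf. \cite{ACT}) one has $B(q,G)^{+}=B(q,G)$. Transporting the non-unital $E_\infty$-ring structure on $\hofib\rho^{B(q,G(-))}$ across this equivalence — which is an equivalence of infinite loop spaces, natural in $q$ since every construction in the chain is functorial — puts a non-unital $E_\infty$-ring structure on each $B(q,G)$, fitting into the desired tower; running the same argument for $q=\infty$ and identifying $\hofib\rho^{B(\infty,G(-))}$ with the classical non-unital $E_\infty$-ring (respectively infinite loop) structure on $BG$ — the homotopy-orbit model $\bigsqcup_n E\Sigma_n\times_{\Sigma_n}BG(n)$ group-completing to the naive one precisely because the symmetric group acts homologically trivially — exhibits the tower as a filtration of $BG$. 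One may additionally invoke Theorem~\ref{thm:compare_g} to identify $\hofib\rho^{B(q,G(-))}$ with $\Omega B\uhocolim_{\I}B(q,G(-))$, but it is Theorem~\ref{loop permutative} that pins the homotopy type down as $B(q,G)$.

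The main obstacle is verifying the homological triviality of the $\Sigma_\infty$-action. For $G=U$, $SU$ and $Sp$ it is immediate: the relevant permutation matrices lie in the connected group $G(\infty)$, so conjugation by them is homotopic to the identity on $B(q,G)$. The delicate cases are $O$ and $SO$, where a transposition has determinant $-1$ and cannot be joined to the identity; here one instead checks that conjugation by a single reflection $r=\operatorname{diag}(-1,1,1,\dots)$ becomes an inner automorphism after stabilizing to an odd rank (where $-r$ has determinant $+1$ and lies in the identity component), and hence acts trivially on $H_*(B(q,SO(2m+1)))$ and $H_*(B(q,O(2m+1)))$; passing to the colimit over all ranks through the odd ones, the potentially nontrivial even-rank contribution (the analogue of negating an Euler class, which is unstable) washes out. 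These facts are in effect part of the structure theory of these spaces developed in \cite{ACT}. The remaining points needing attention are the multiplicative structure for $Sp$ and the identification of the $q=\infty$ term with the classical structure on $BG$; the compatibility of all the identifications along the tower is then a formal consequence of the naturality built into Sections~\ref{commutative I-monoids} and~\ref{identifying}.
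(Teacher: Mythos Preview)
Your proposal is correct and follows essentially the same route as the paper: realise $B(q,G)$ as $\hofib\rho^{B(q,G(-))}$ via Theorem~\ref{loop permutative} and then invoke Theorem~\ref{thm:hofib_ringspace}, with the homological triviality of the $\Sigma_\infty$-action handled for $U,SU,Sp$ by path-connectedness and for $O,SO$ by the odd-rank trick. The only noteworthy variation is in verifying the third hypothesis of Theorem~\ref{loop permutative}: the paper builds an explicit $H$-space structure on $B(q,G)$ (via an injection $\N\sqcup\N\to\N$) to conclude that $\pi_1$ is abelian and $B(q,G)$ is an abelian space, whereas you defer to Gritschacher's general result recorded in the Remark and to the computation of $\pi_1 B(q,G)$ from \cite{ACT}; both are legitimate, and your flagging of the $Sp$ tensor product as the point needing most care is apt (the paper itself passes over it without comment).
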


\begin{proof}
Consider first the case of $BU$. Recall that the spaces $B(q,U)$ 
provide a filtration of the space $BU$ 
\[
B(2,U)\subset B(3,U)\subset \cdots\subset B(q,U)\subset B(q+1,U)\subset \cdots 
\subset BU.
\]
We will show that this filtration is a filtration by \john{non-unital} $E_\infty$-ring spaces. 
For this notice that by the main example in Section 2, each  $\n \mapsto B(q, U(n))$ for $q\geq 2$ 
is a commutative  $\I$-rig. In what follows we are going to show  that 
the conditions of Theorem \ref{loop permutative} are satisfied, and hence
$B(q,U)\simeq \hofib \rho^ {B(q, U(-))}$ is a \john{non-unital} $E_\infty$-ring space by Theorem \ref{thm:hofib_ringspace}.

The conjugation action of $\Sigma_{n}$ on $B(q,U(n))$ is homologically trivial 
because this action factors through the conjugation action of 
$U(n)$. The conjugaton action by any element in $U(n)$ is trivial, 
up to homotopy, since the action of the identity matrix is trivial and
$U(n)$ 
is path-connected. 
This implies in particular that the action of 
$\Sigma_{\infty}$ on $B(q,U)$ is  homologically trivial.   

Note that $B(q,U(n))$ and hence $B(q,U)$ is path connected. Next, we argue that the space $B(q,U)$ 
is an H-space under direct sum multiplication.
To be more precise, consider 
the injection $\mathbb N \sqcup \mathbb N \to \mathbb N$
defined by $(1,2,3,4,\dots ) \cup (1',2',3',4', \dots )
\mapsto (1,2,1',2',3,4,3',4', \dots)$.
It  defines a map of vector spaces 
$\mathbb C^\infty \times \mathbb C^\infty \to \mathbb C^\infty$ 
and hence a continuous homomorphisms $U \times U \to U$.
The image of $U(n)$ in $U$ under right or left multiplication by the identity 
matrix $I$ 
differs from the 
image under the standard inclusion by conjugation of an 
even permutation. As such a permutation is in the path-component of the identity matrix, we see that the multiplication is unital up to homotopy.

H-spaces have abelian fundamental group and hence
Theorem \ref{loop permutative} applies.   We conclude 
that $B(q,U)\simeq \hofib \rho^{B(q, U(-)) }$
for every $q\ge 2$ and is a \john{non-unital} $E_{\infty}$-ring space by 
Theorem \ref{thm:hofib_ringspace}.
The very same arguments can be used to prove 
analogous statements for the commutative $\I$-rig  $\n \mapsto B(q,SU(n))$, 
and $\n \mapsto B(q, Sp (n))$ 
for any $q\ge 2$.

In case of the commutative $\I$-rig 
$\n \mapsto B(q, SO(n))$ we note that $\Sigma_n$ is not a subgroup of $SO(n)$. 
Nevertheless, the alternating group $A_n$ is contained in $SO(n)$ and 
by the same argument as above acts therefore 
trivially on the homology of $B(q, SO(n))$.
Furthermore, when $n$ is odd, any odd permutation is represented by a matrix with determinant 
equal to $-1$. Hence it can be path-connected to the diagonal matrix $-I$ with constant entry $-1$. 
As $-I$ is in the center of $O(n)$ it acts trivially by conjugation on $B(q,SO(n))$ and 
hence also on its homology. 
But then so does any odd permutation. This proves that 
when $n$ is odd the action of $\Sigma_{n}$ on $B(q,SO(n))$ is homologically trivial. 
This in turn implies that the action of $\Sigma_{\infty}$ on $B(q,SO)$ is homologically trivial. 
We also have that $B(q,SO)$ is an H-space and hence abelian. Thus 
$B(q,SO)\simeq \hofib \rho^{B(q, SO(-)) }$
for every $q\ge 2$ and it is a \john{non-unital} $E_{\infty}$-ring space by 
Theorem \ref{thm:hofib_ringspace}.
This line of argument can also be used to prove the analogous statement for 
the commutative $\mathbb I$-rig  $\n \mapsto B(q, O(n))$.

\end{proof}



As remarked in \cite[Thm. 6.3]{ACT}, 
the natural map 
$\Omega B(q,G)\to \Omega BG$ admits a splitting up to homotopy. It is given by 
a factorization of the usual homotopy equivalence 
$G  \to \Omega BG$. Indeed we have that 
$\Sigma G = F_1B(q,G) = F_1BG$, where $F_1$ denotes the  first
layer in the usual filtration of the geometric realization of these simplicial
spaces. Hence, the  adjoint of $\Sigma G \to BG$ 
factors through $\Omega B(q, G)$. 
Note that this splitting does not in general admit a delooping; see 
\cite[Section 6]{ACT} for a counter-example.
Nevertheless, we have the following theorem. Here
$E(q,G)$ denotes the pull-back of the 
universal $G$-bundle $EG$ over $BG$.
It is homotopy equivalent to the homotopy fiber of the
inclusion $B(q,G) \to BG$.

\begin{theorem}\label{splitting_thm}
For all $q\ge 2$, and $G=U, SU, SO, O$ and $ Sp$, there is a homotopy split 
fibration of infinite loop spaces
$$
E(q, G) \longrightarrow B(q, G) \longrightarrow BG.
$$
In particular there is  a splitting of 
spaces 
\[
B(q,G) \simeq BG \times E(q,G).
\]
Both are natural in the entry $q$, 
meaning that both are compatible with  the filtration maps.
\end{theorem}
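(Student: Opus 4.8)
The plan is to promote the homotopy splitting of $\Omega B(q,G) \to \Omega BG$ recalled in the preceding paragraph to a splitting in the category of infinite loop spaces, and then to identify $E(q,G)$ with the homotopy fiber. First I would recall that by Theorem \ref{cor infinite loop spaces} both $B(q,G)$ and $BG$ are non-unital $E_\infty$-ring spaces arising from the commutative $\I$-rigs $\n \mapsto B(q,G(n))$ and $\n \mapsto G(n)$ (the latter being the usual complex/real/symplectic vector space $\I$-rig, for which $\hofib \rho \simeq BG$). The inclusion $F_n/\Gamma_n^q \to F_n$, or rather the identity map $G(n) \to G(n)$ viewed as sending a tuple that already generates a nilpotent-of-class-$<q$ subgroup to itself, induces a morphism of commutative $\I$-rigs $B(q,G(-)) \to G(-)^{\mathrm{triv}}$... more precisely the natural inclusion $B(q,G(n)) \hookrightarrow G(n)$ of $\I$-monoids is a map of commutative $\I$-rigs, since block sum and tensor product restrict compatibly. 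Applying the functor $X \mapsto \hofib \rho^X$ of Theorem \ref{thm:hofib_ringspace} (which is functorial in maps of commutative $\I$-rigs) yields the infinite loop, indeed non-unital $E_\infty$-ring, map $B(q,G) \to BG$; the compatibility with the filtration in $q$ is immediate from functoriality applied to the inclusions $B(q,G(-)) \hookrightarrow B(q+1,G(-))$.

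Next I would construct the splitting. The key point is that the homotopy equivalence $G \simeq \Omega BG$ factors through $\Omega B(q,G)$ via the first filtration layer $F_1 B(q,G) = \Sigma G = F_1 BG$, as recalled above. I want to see this as a map of infinite loop spaces, equivalently to produce a splitting of the spectrum-level map $K(B(q,G(-))) \to K(G(-)^{\mathrm{triv}})$ of non-unital $E_\infty$-ring spectra after restricting to the underlying connective spectra. The cleanest route: observe that $\mathbf{1} \mapsto G(1) = G$ with its conjugation $\Sigma_1$-action (trivial) and the $\I$-rig structure restricted suitably already produces, via the machinery of Section \ref{commutative I-monoids}, a copy of the connective spectrum with zeroth space $BG$; the map classifying the tautological bundle realizes $BG$ as a retract. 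More concretely, I would use that $BG$ is the zeroth space of a non-unital $E_\infty$-ring spectrum which is a module/retract, and appeal to the uniqueness of infinite loop structures (Segal, May) together with the fact that the composite $BG \to B(q,G) \to BG$ is, on the level of group completions of the $\P$-homotopy colimits, induced by a composite of maps of commutative $\I$-monoids which is the identity up to natural isomorphism — because a tuple of commuting elements maps to itself. Thus the composite is an equivalence of infinite loop spaces, and I would precompose with its inverse to arrange that $BG \to B(q,G) \to BG$ is the identity.

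Finally, define $E(q,G)$ to be the homotopy fiber of the infinite loop map $B(q,G) \to BG$; since $\Omega^\infty$ preserves fiber sequences, $E(q,G)$ is an infinite loop space and the fibration $E(q,G) \to B(q,G) \to BG$ is a fibration of infinite loop spaces, split by the section just constructed, giving $B(q,G) \simeq BG \times E(q,G)$ as infinite loop spaces and a fortiori as spaces. That this $E(q,G)$ agrees with the pullback of $EG$ along $B(q,G) \to BG$ is a standard comparison: the pullback sits in a fibration $E(q,G) \to B(q,G) \to BG$ over the \emph{same} base with fiber $G \simeq \Omega BG$... rather, the pullback of the path-loop fibration $EG \to BG$ along $B(q,G)\to BG$ has total space weakly contractible over each point, hence is homotopy equivalent to the homotopy fiber; one checks the identification is compatible with the filtration maps in $q$ since all the constructions are natural in the morphism of $\I$-rigs. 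The main obstacle I anticipate is the second step: upgrading the classical, merely homotopy-theoretic splitting through $F_1$ to a splitting respecting the $E_\infty$ (or even just infinite loop) structures — one cannot simply invoke $F_1$, since as noted that splitting need not deloop; the fix is to realize the splitting instead from functoriality of the $\hofib \rho^{(-)}$ construction applied to the retraction of commutative $\I$-rigs $B(q,G(-)) \hookrightarrow G(-)^{\mathrm{triv}}$ and back, which does live at the structured level, and to carefully verify that the relevant composite of $\I$-monoid maps is naturally isomorphic to the identity so that its group completion is an equivalence.
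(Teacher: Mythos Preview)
Your framework is right---the map $B(q,G)\to BG$ arises from a morphism of commutative $\I$-rigs and is therefore an infinite loop map, with $E(q,G)$ its homotopy fiber---but the construction of the section has a real gap. You correctly flag the obstacle: the classical splitting via $F_1B(q,G)=\Sigma G$ is only a map of spaces and need not deloop. Your proposed fix is to exhibit a ``retraction of commutative $\I$-rigs $B(q,G(-))\hookrightarrow G(-)^{\mathrm{triv}}$ and back'', but you never say what the map back is, and in fact there is no natural map of $\I$-monoids $BG(n)\to B(q,G(n))$: an arbitrary simplex of $BG(n)$ has no reason to land in the nilpotent locus, and there is no canonical way to push it there. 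So the composite you wish to show is ``naturally isomorphic to the identity'' has not been defined. (Your aside about $\mathbf{1}\mapsto G(1)$ does not help either: $G(1)$ is $U(1)$, $SU(1)$, etc., not the stable group $G$.)

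The paper's key maneuver, which you are missing, is to loop down one level \emph{before} invoking the $\I$-monoid machinery. One works instead with the commutative $\I$-monoids $\n\mapsto G_n$, $\n\mapsto\Omega B(q,G_n)$, and $\n\mapsto\Omega BG_n$. At this looped level the section \emph{does} exist as a map of $\I$-monoids: the standard map $G_n\to\Omega BG_n$ factors through $\Omega B(q,G_n)$ (as the adjoint of $\Sigma G_n=F_1B(q,G_n)\hookrightarrow B(q,G_n)$), and this factorization is compatible with block sum and the $\Sigma_n$-actions. One then checks the hypotheses of Theorem~\ref{loop permutative} for all three $\I$-monoids---this is where Lemma~\ref{fundamental group} is needed, to see that $\pi_0(\Omega B(q,G_n))=\pi_1(B(q,G_n))$ is $0$ or $\Z/2$ and stabilizes correctly---and obtains maps of $E_\infty$-spaces
\[
G_\infty\longrightarrow\Omega B(q,G_\infty)\longrightarrow\Omega BG_\infty
\]
whose composite is the usual equivalence. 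Since $B(-)$ is compatible with $E_\infty$-structures, applying it once yields the split fibration of infinite loop spaces $BG\to B(q,G)\to BG$, natural in $q$.
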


In order to prove the theorem, we will need to know the 
fundamental group of $B(q,G)$ for the groups in question. We have the following result general result.

\begin{lemma}\label{fundamental group}
Let $G$ be a topological group with a CW-structure.
Assume $\pi_0 (G)$ is abelian and that the natural homomorphism $G\to \pi_0(G)$ splits. 
Then for all $q\geq 2$
$$\pi _1 (B(q, G)) = \pi_0 (G).$$
\end{lemma}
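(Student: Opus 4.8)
The plan is to compute $\pi_1(B(q,G))$ directly from the simplicial space $B_\bullet(q,G)$ using the standard fact that the fundamental group of a realization of a simplicial space depends only on the $1$- and $2$-skeleta. First I would record that $B_0(q,G) = \ast$ and $B_1(q,G) = \Hom(F_1/\Gamma_1^q, G) = \Hom(F_1,G) = G$, since $\Gamma_1^q \subseteq \Gamma_1^2$ is already trivial for $F_1$ (the free group on one letter is abelian, so $\Gamma_1^2 = 1$). Thus the $1$-skeleton contributes $\Sigma(G_+)$ as usual and $\pi_1$ of the realization is a quotient of the free group on $\pi_0(G)$ (with the path component of $1_G$ killed by the degeneracy from $B_0 = \ast$). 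The $2$-simplices $B_2(q,G) = \Hom(F_2/\Gamma_2^q, G)$ impose the relations: for each pair $(g_1,g_2)$ lying in this subspace, the word $g_1 \cdot g_2 = (g_1 g_2)$ coming from the three face maps $d_0,d_1,d_2$. Passing to $\pi_0$, this says that in $\pi_1(B(q,G))$ we have $[g_1][g_2] = [g_1 g_2]$ whenever $[g_1],[g_2]$ can be realized by a point of $B_2(q,G)$ in those components.

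The key point is then to determine exactly which relations $[g_1][g_2]=[g_1g_2]$ we obtain, i.e. which pairs of components of $G$ are hit by $\pi_0(B_2(q,G)) \to \pi_0(G)\times\pi_0(G)$. Here is where the two hypotheses enter. Since $\pi_0(G)$ is abelian and $G \to \pi_0(G)$ splits, pick a set-theoretic section $s\colon \pi_0(G) \to G$ that is a group homomorphism onto a subgroup $H \cong \pi_0(G)$ of $G$. Because $H$ is abelian, every tuple $(s(a_1),\dots,s(a_n))$ lies in $\Hom(F_n/\Gamma_n^2,G) \subseteq \Hom(F_n/\Gamma_n^q,G) = B_n(q,G)$ for all $q\ge 2$. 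Consequently all of $H^n \subseteq B_n(q,G)$, and in particular $B_\bullet(2,H) = B_\bullet H$ sits inside $B_\bullet(q,G)$ as a simplicial subspace, giving a map $BH = B\pi_0(G) \to B(q,G)$. This already shows $\pi_0(G) = \pi_1(BH)$ maps to $\pi_1(B(q,G))$, and surjectivity of $\pi_0(B_1(q,G)) = \pi_0(G) \to \pi_1(B(q,G))$ was noted above, so the map $\pi_0(G) \to \pi_1(B(q,G))$ is onto. For injectivity, I would use that the inclusion $B(q,G) \hookrightarrow BG$ (from the excerpt's filtration) induces on $\pi_1$ the composite $\pi_0(G) \to \pi_1(B(q,G)) \to \pi_1(BG) = \pi_0(G)$, and that this composite is the identity: this follows because on $1$-skeleta both maps are the identity on $\Sigma(G_+)$, and the section-induced map $B\pi_0(G) \to B(q,G) \to BG$ is the standard one. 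Since the composite $\pi_0(G) \to \pi_1(B(q,G)) \to \pi_0(G)$ is the identity and the first arrow is surjective, both arrows are isomorphisms; in particular $\pi_1(B(q,G)) = \pi_0(G)$.

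The main obstacle I anticipate is making the skeletal $\pi_1$-computation rigorous enough to be sure there are no extra relations beyond those coming from $B_2(q,G)$, and in particular that a relation of the form $[g_1][g_2] = [g_1 g_2]$ is available for *enough* pairs to conclude the presentation collapses to $\pi_0(G)$ — the danger being that if $B_2(q,G)$ does not surject onto $\pi_0(G)\times\pi_0(G)$ we would only get a quotient of a free product of cyclic groups, not $\pi_0(G)$ itself. The abelianness of $\pi_0(G)$ together with the splitting resolves this precisely because it forces all of $H^2 \subseteq B_2(q,G)$, so every pair $(a,b) \in \pi_0(G)^2$ is realized and we recover the full multiplication table. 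Care is also needed at the basepoint: the relation identifying the component of $1_G$ with the identity of $\pi_1$ comes from the unique $0$-simplex together with its degeneracy into $B_1(q,G)$, exactly as in the bar construction for $G$ itself. Once these points are in place, the argument is a routine comparison with the $q=2$ (commuting) case and with $BG$.
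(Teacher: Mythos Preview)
Your approach is correct and essentially the same as the paper's. Both arguments use that $\pi_1(B(q,G))$ is generated by the first filtration $\Sigma G$, that the splitting realizes $\pi_0(G)$ as an abelian subgroup $H\subset G$ so that all pairs $(h,h')\in H^2$ lie in $B_2(q,G)$ and impose the full multiplication table as relations, and then conclude by factoring through $\pi_1(BG)=\pi_0(G)$; the only cosmetic difference is that the paper writes the first filtration as the reduced suspension $\Sigma G$ directly rather than $\Sigma(G_+)$ modulo the degeneracy.
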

\begin{proof}
Consider $\Sigma G = F_1B(q,G) = F_1BG$. As the 1-skeleton of the realisation of a (good) simplicial space is contained in the first filtration \cite [Prop. 11.4] {May}, any map from $S^1$ to $B(q,G)$ will factor through $\Sigma G$. Hence the map $\Sigma G \to B(q,G)$ is surjective on fundamental groups. 
 
The fundamental group of a suspension $\Sigma X$ for any space $X$ has fundamental group the free group over
the set $\pi_0(X) -\{1\}$; hence we have
$$
\pi_1 (\Sigma G) =  F(g | \, g\in \pi_0 (G)-\{1\}).
$$
The inclusion $\Sigma G \to BG$ induces the surjective map of fundamental groups 
$\pi_1 (\Sigma G) \to \pi_0 (G)$ which sends a generator $g$ to the \textsl{element}  $g\in \pi_0(G)$, and more generally the word $g_1 \bullet \dots \bullet g_k$  to the product of the elements $g_1 \dots g_k$.   To see this
geometrically, consider $\pi_0(G)$ as a subgroup of $G$, and note that the 2-simplex $(g,h)$ defines a homotopy from the two--letter word  $g\bullet h$ to the
product element $gh$. 

We now note that as $\pi_0 (G)$ is abelian, the 2-simplex
$(g, h)$ is contained in $B_2(q, G)$ for $q\geq2$. Hence all the above relations are already satisfied in $\pi_1 (B(q, G))$. As the factorization $\pi_1 (\Sigma G) \to \pi_1(B(q, G))\to \pi_1(BG)$ is surjective, the result follows.
\end{proof} 

\begin{proof}[Proof of Theorem \ref{splitting_thm}]
As $EG_{\infty} \simeq *$, for every $q\ge 2$ we have a 
homotopy fibration sequence 
$E(q,G_{\infty})\to  B(q,G_{\infty})\to  BG_{\infty}$.
As the map on the right is a map of infinite loop spaces, the homotopy 
fiber $E(q,G_{\infty})$ is an infinite loop space.
It remains to show that it splits.

Let $G_{n}$  
denote one of the groups $U(n)$, $SU(n)$,  $SO(n)$,  $O(n)$ or $Sp(n)$ so that 
$G_{\infty}=\ucolim_{n}G_{n}$ denotes  the group $U$, $SU$, $SO$, $O$ or $Sp$  respectively. 
For each fixed $q\ge 2$, the assignment $\n\mapsto \Omega B(q,G_{n})$ 
defines a commutative $\I$-rig with $\mu$ given 
by block sum 
and $\pi$ given by tensor product of matrices. 
In the same way the assignment 
$\n\mapsto \Omega BG_{n}$ also defines a commutative $\I$-rig 
and the inclusion map $\Omega B(q,G_{n}) \to \Omega BG_{n}$ defines 
a morphism of commutative $\I$-rigs. 

We claim that the 
commutative $\I$-rigs $G_{-}$, $\Omega B(q,G_{-})$ and $\Omega BG_{-}$ 
satisfy the hypotheses of Theorem \ref{loop permutative}. 
Indeed, except in the case $G = O$,
$G_{n} \simeq \Omega BG_{n}$ is path-connected for every $n \geq 0$ and,   
as $\pi_{0}(\Omega B(q,G_{n}))\cong\pi_{1}(B(q,G_{n}))$ 
is trivial by Lemma \ref{fundamental group}, 
$\Omega B(q,G_{n})$ is also path-connected. When $G=O$, 
$$
\pi_0 (\Omega B(q, O(n))) = \pi_1 B(q, O(n)) = \Z/2\Z
$$   
for each $n \geq 1$ by Lemma \ref{fundamental group}.
The multiplication in $\pi_0 \Omega B (q, O(n))$ 
is compatible with direct sum and stabilisation. 
This checks the second condition in Theorem \ref{loop permutative}.

Except in the cases $G =SO$ or $G=O$ the action of 
$\Sigma_{n}$ is homologically trivial as conjugation by any element in the path component of the identity is  trivial, up to homotopy, 
and  $G_{n}$ is path-connected. This implies that  
$\Sigma_{\infty}$ acts homologically trivial on $G_{\infty}$, 
$\Omega B(q,G_{\infty})$ and $\Omega BG_{\infty}$. The same conclusion 
can be obtained for  $G =SO$ or $G=O$ using a similar argument as 
in the proof of Theorem \ref{cor infinite loop spaces}. Hence the first condition from Theorem \ref{loop permutative} holds.

To verify the third condition, observe that the commutator group 
of $\pi_{1}(\Omega B(q,G_{n}))\cong \pi_{2}(B(q,G_{n}))$ is trivial, 
as this group is abelian in all cases. Finally,  
$\Omega B(q,G_{\infty})$ is an abelian space since it is a loop space and 
hence in particular  an $H$-space. 

By Theorem \ref{loop permutative}
we thus have maps of $E_{\infty}$-spaces
\[
G_{\infty}\to  \Omega B(q,G_{\infty})\to \Omega BG_{\infty}
\]
whose composition is a homotopy equivalence. 
Taking classifying spaces is compatible with  $E_\infty$-space structures 
and hence the above splitting deloops to give the  
splitting of the  theorem.
\end{proof}

We have concentrated so far on compact groups such as $O(n)$ and $U(n)$,
although the methods clearly extend to other linear groups.
Using some results by Pettet-Souto \cite{PS} and
Bergeron \cite{Bergeron} we can prove the following theorem.

\begin{theorem}\label{reduction to compact case}
Suppose that $G$ is the group of complex or real points in a reductive
linear algebraic group (defined over $\R$ in  the real case). Let
$K\subset G$ be a maximal compact subgroup. Then the inclusion map
$i:B(q,K)\to B(q,G)$ is a homotopy equivalence for every $q\ge 2$.
\end{theorem}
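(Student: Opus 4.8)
The plan is to reduce the statement to a simplicial-space argument, using the fact that $B(q,-)$ is the geometric realization of the simplicial space $[n] \mapsto B_n(q,-) = \Hom(F_n/\Gamma_n^q, -)$ and that geometric realization preserves levelwise homotopy equivalences of good simplicial spaces. So it suffices to show that for each $n \ge 0$ the inclusion
\[
i_n \colon B_n(q,K) = \Hom(F_n/\Gamma_n^q, K) \longrightarrow \Hom(F_n/\Gamma_n^q, G) = B_n(q,G)
\]
is a homotopy equivalence, compatibly with the simplicial structure maps (which it automatically is, being induced by the inclusion $K \hookrightarrow G$ on all of them). First I would recall that the nilpotent quotient $F_n/\Gamma_n^q$ is a finitely generated nilpotent group; write $N = N_{n,q} := F_n/\Gamma_n^q$. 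Then $B_n(q,G) = \Hom(N,G)$ is exactly the representation space $\Rep(N,G)$ studied by Pettet--Souto and Bergeron, and the claim is precisely that $\Hom(N,K) \hookrightarrow \Hom(N,G)$ is a deformation retract for finitely generated nilpotent $N$.

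The key input is the theorem of Pettet--Souto \cite{PS}: for $G$ the group of real or complex points of a reductive linear algebraic group with maximal compact $K$, and for any finitely generated \emph{nilpotent} group $N$, the inclusion $\Hom(N,K) \hookrightarrow \Hom(N,G)$ is a strong deformation retract. (Pettet--Souto prove this for $N$ abelian, i.e. $\Hom(\Z^n,-)$; the extension to finitely generated nilpotent $N$ is due to Bergeron \cite{Bergeron}, which is why both references are cited.) The steps I would carry out are: (1) identify $B_n(q,G)$ with $\Hom(N_{n,q},G)$ and observe $N_{n,q}$ is finitely generated nilpotent (of class $< q$); (2) invoke Pettet--Souto/Bergeron to get, for each $n$, a strong deformation retraction $r_n \colon \Hom(N_{n,q},G) \to \Hom(N_{n,q},K)$; (3) check that these retractions can be chosen compatibly with the simplicial face and degeneracy maps, so that $\{i_n\}$ is a levelwise homotopy equivalence of simplicial spaces; (4) conclude via realization that $i \colon B(q,K) \to B(q,G)$ is a homotopy equivalence. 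One should also note the simplicial spaces are good (the degeneracies are closed cofibrations, since $K$ and $G$ have CW-structures and the inclusions $\Hom(N_{n,q},-) \subset G^n$ are closed), so realization is homotopy-invariant.

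The main obstacle is step (3): the deformation retractions produced by the Pettet--Souto and Bergeron arguments are not a priori natural in $N$, so one must argue that they assemble into a simplicial homotopy equivalence rather than just a levelwise one. The cleanest route is to avoid requiring strict compatibility altogether: since a levelwise homotopy equivalence between good simplicial spaces induces a homotopy equivalence on realizations regardless of naturality (one can use the skeletal filtration and induct, gluing homotopies over the cofibrations $\mathrm{sk}_{n-1} \hookrightarrow \mathrm{sk}_n$), it is enough that each $i_n$ is \emph{some} homotopy equivalence. Thus the substantive content is entirely contained in the cited results of Pettet--Souto and Bergeron, and the remainder is the standard bookkeeping of realizing a levelwise equivalence of good simplicial spaces. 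Alternatively, if one wants the equivalence $B(q,K) \simeq B(q,G)$ to respect the filtration in $q$, one observes the whole construction is functorial in the surjection $N_{n,q+1} \twoheadrightarrow N_{n,q}$, which is automatic since the inclusion $K \hookrightarrow G$ is fixed throughout.
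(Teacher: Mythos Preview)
Your approach is essentially the same as the paper's: invoke Bergeron's theorem for the levelwise homotopy equivalence and then pass to geometric realization using properness of the simplicial spaces. Your discussion of step (3) is also correct and not really an obstacle---since the $i_n$ assemble into a simplicial map, one only needs each $i_n$ to be a homotopy equivalence, not that the Pettet--Souto/Bergeron retractions be simplicially compatible.

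The genuine gap is in your justification of goodness. You write that ``the degeneracies are closed cofibrations, since $K$ and $G$ have CW-structures and the inclusions $\Hom(N_{n,q},-) \subset G^n$ are closed.'' This does not follow: a closed subset of a CW complex need not be a subcomplex, need not have the homotopy type of a CW complex, and a closed inclusion is certainly not automatically a cofibration. The spaces $B_n(q,G)$ are typically singular varieties and do not obviously inherit a useful CW structure from $G^n$. What is actually needed is that the pair $(B_n(q,G), S_n^1(q,G))$ is an NDR-pair, where $S_n^1(q,G)$ is the union of the images of the degeneracies, i.e.\ the subspace of tuples with at least one coordinate equal to $1_G$.

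The paper fills this gap by exploiting the algebraicity hypothesis on $G$: since $G$ embeds as a Zariski closed subgroup of $SL_N(\C)$, both $B_n(q,G)$ and $S_n^1(q,G)$ are algebraic (hence semi-algebraic) subsets of $G^n$, the former because the iterated commutator relations are polynomial, the latter because it is a finite union of coordinate slices. One then invokes Hironaka's semi-algebraic triangulation theorem to triangulate $B_n(q,G)$ so that $S_n^1(q,G)$ is a subcomplex. This gives the required strong NDR-pair, hence properness of $B_*(q,G)$; the same argument applies to $K$. With properness in hand, the glueing lemma (e.g.\ \cite[Thm.~A.4]{May1}) finishes the proof exactly as you outline.
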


\begin{proof}
By \cite[Thm.I]{Bergeron}  it follows that the inclusion map
$i_{n}:B_{n}(q,K)\to B_{n}(q,G)$ is a homotopy equivalence for all
$q\ge 2$ and all $n\ge 0$. Thus the inclusion map
induces a simplicial map $i_{*}:B_{*}(q,K)\to B_{*}(q,G)$ that
is a level-wise homotopy equivalence. Since  
$G$ is assumed to be  the group of complex or real points in a reductive
linear algebraic group (defined over $\R$ in the the real case),  
we can identify $G$ with a Zariski
closed subgroup of $SL_{N}(\C)$ for some $N\ge 0$. Also, 
for every $n\ge 0$ we can see the space $B_{n}(q,G)$ as an algebraic 
variety since it is defined in terms of iterated commutators of elements in $G$ 
and such equations can be defined in terms of polynomial functions. Moreover, 
the subspace $S_{n}^{1}(q,G)\subset B_{n}(q,G)$ consisting of all 
$n$-tuples in $B_{n}(q,G)$ for which at least one of the coordinates 
is equal to $1_{G}$ is an algebraic subvariety of  $B_{n}(q,G)$. 
By the semi-algebraic triangulation theorem (see \cite[Section 1]{Hironaka}) 
it follows that   $B_{n}(q,G)$ has the structure of a CW-complex in  
such a way that $S_{n}^{1}(q,G)$ is a subcomplex. In particular, it follows that 
the pair $(B_{n}(q,G),S_{n}^{1}(q,G))$ is a strong NDR-pair. This proves 
that $B_{*}(q,G)$ is a proper simplicial space. The same is true for 
$B_{*}(q,K)$.  Using the
glueing lemma, for example see \cite[Thm. A.4]{May1},
we obtain the result of the  theorem.
\end{proof}

Our tools can also be used to obtain a similar filtration for the infinite 
loop space defining algebraic K-theory for any discrete ring $R$. 
Indeed, suppose that  $R$ is a discrete ring with unit and let 
$q\ge 2$. Consider 
the commutative   $\I$-rig  $B(q,GL_{-}(R))$ defined by
$\n\mapsto B(q,GL_{n}(R))$. As before the 
morphisms are induced by the natural 
inclusions and the conjugation action of $\Sigma_{n}$ on $B(q,GL_{n}(R))$.  
The multiplication map 
\[
\mu_{\n,\m}:B(q,GL_{n}(R))\times B(q,GL_{m}(R))\to 
B(q,GL_{n+m}(R))
\] 
is also given by the block sum  and $\pi$ by tensor product of matrices. 
Note that  Theorem \ref{thm:compare_g} applies to give
$$\uhocolim_{ \I}B(q,GL_{-}(R)) \simeq \hofib\rho^{ B(q, GL_{-} (R))}. $$
By Theorem \ref{thm:hofib_ringspace}, this space has the structure of a \john{non-unital}
$E_\infty$-ring space. 
This way we obtain a filtration of \john{non-unital}
$E_\infty$-ring spaces:
\[
\uhocolim_{\I}B(2,GL_{-}(R))\subset \cdots 
\subset \uhocolim_{\I}B(q,GL_{-}(R))\subset \cdots\subset 
\uhocolim_{\I}BGL_{-}(R).
\]
As is well known, the conjugation action of $\Sigma_{n}$ on $BGL_{n}(R)$ 
is homologically trivial. It follows from Theorems \ref{loop permutative} and \ref{thm:compare_g} that we have an 
equivalence 
$$
BGL_{\infty}(R)^{+} \simeq \hofib \rho^{ BGL_{-}(R)} \simeq \uhocolim_{\I}BGL_{-}(R).
$$ 
Thus the above 
gives a  filtration  of  \john{non-unital} $E_\infty$-ring spaces with final space weakly homotopy equivalent to the 
algebraic K-theory of $R$.
However,  unlike the case of $BGL_{n}(R)$, 
we do not know whether the conjugation action of 
$\Sigma_{n}$ on $B(q,GL_{n}(R))$   is  
homologically trivial, and we expect that  the natural 
map 
\[
B(q,GL_{\infty}(R))\to \uhocolim_{\I} B(q, GL_{-}(R))
\]
is not a homology isomorphism. 

In a similar way we can obtain a filtration of $Q(\S^{0})$. 
For this note that 
the conjugation action of 
$\Sigma_{n}$ on $B\Sigma_{n}$ is homologically trivial. 
Therefore, by the Barratt-Priddy-Quillen theorem, the level zero component of $Q(\S^{0})$ is equivalent to the 
homotopy colimit over $\I$ of the classifying spaces of the symmetric groups:
\[
Q_{0}(\S^{0})\simeq (B\Sigma_{\infty})^{+}\simeq \hofib \rho^{ B\Sigma _{-}}\simeq  \uhocolim_{\I}B\Sigma_{-}.
\]
Consider the commutative $\I$-rig 
$B(q,\Sigma_{-})$ defined by
$\n\mapsto B(q,\Sigma_{n})$. The structural maps are given by conjugation 
of $\Sigma_{n}$ and inclusions in an analogous way as above. 
Then by Theorem \ref{loop hocolim} 
we have  a filtration  of  \john{non-unital} $E_\infty$-ring spaces 
\[
\uhocolim_{\I}B(2,\Sigma_{-})\subset \cdots 
\subset \uhocolim_{\I}B(q,\Sigma_{-})\subset\cdots\subset \uhocolim_{\I}B\Sigma_{-}
\simeq Q_0(\S^{0}).
\]
As in the case of $B(q,GL_{n}(R))$, the conjugation action of $\Sigma_{n}$ 
on $B(q,\Sigma_{n})$ may fail to be homologically trivial
(for example this is the case for the conjugation action of $\Sigma_{3}$ on $B(2,\Sigma_{3})$,
see \cite{ACT}).
The conditions of Theorem \ref{thm:compare_g} are satisfied but the 
homotopy types of the spaces $\uhocolim_{\I}B(q,\Sigma_{-}) \simeq 
\hofib \rho^{B(q, \Sigma _{-})}$ remain to be determined.

\begin{corollary}
The spaces 
$$
\uhocolim_{\I}B(q,GL_{-}(R)) \simeq \hofib \rho^{ B(q, GL_{-}(R))}
\quad \text{ and } \quad 
\uhocolim_{\I}B(q,\Sigma_{-}) \simeq \hofib \rho^{B(q, \Sigma_{-})}
$$ 
provide filtrations of  \john{non-unital}
$E_\infty$-ring spaces with final target 
 the classical \john{non-unital} $E_\infty$-ring spaces 
$BGL_{\infty}(R)^{+}$ and $Q_0(\S^{0})$.
\end{corollary}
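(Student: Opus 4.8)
The plan is to run the machinery of Sections~\ref{commutative I-monoids} and~\ref{identifying} on the commutative $\I$-rigs $B(q,GL_{-}(R))$ and $B(q,\Sigma_{-})$. First I would check that these are indeed commutative $\I$-rigs: the verification is word for word that of the main example in Section~\ref{subsec:examples}, with $\mu$ given by block sum of matrices, $\pi$ by tensor product of matrices (for $\Sigma_{-}$, of permutation matrices), the $\Sigma_n$-action by conjugation through permutation matrices, and the units by the base points; the only facts used are that block sum and tensor product of matrices are compatible with matrix multiplication, with permutations, and with one another via the distributivity isomorphisms, all of which hold over any ring. The inclusions $B(q,-)\hookrightarrow B(q+1,-)$ and $B(q,-)\hookrightarrow BGL_{-}(R)$ (resp.\ $B\Sigma_{-}$) are then morphisms of commutative $\I$-rigs. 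Theorem~\ref{thm:hofib_ringspace} now gives that each $\hofib\rho^{B(q,GL_{-}(R))}$ and $\hofib\rho^{B(q,\Sigma_{-})}$ is a non-unital $E_\infty$-ring space, and since the construction $X\mapsto\hofib\rho^{X}$ is visibly functorial in morphisms of commutative $\I$-rigs --- it runs through $X\mapsto\P\ltimes X$, the Elmendorf--Mandell functor $K(-)$, and homotopy fibers of maps of $E_\infty$-ring spectra --- these inclusions induce the desired sequences of maps of non-unital $E_\infty$-ring spaces.

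Next I would invoke Theorem~\ref{thm:compare_g} to replace $\uhocolim_\I$ by $\hofib\rho$. The two hypotheses to verify are that each structural map $X(\n)\to X(\m)$ is a monomorphism and that $\mu_{\n,\m}(x,y)$ lies in the image of a non-identity order-preserving injection if and only if $x$ or $y$ does. The first is immediate: $B(q,-)$ turns the injective homomorphism $GL_n(R)\hookrightarrow GL_m(R)$ (resp.\ $\Sigma_n\hookrightarrow\Sigma_m$) into a map of bar-simplicial spaces that is injective at every level, hence into an injection of realizations. For the second I would attach to a tuple of matrices its \emph{support} --- the smallest coordinate subset off whose block every matrix of the tuple coincides with the identity, which exists because the relevant ``coordinate subgroups'' $GL_S(R)$ are closed under intersection, and which is $\Sigma_n$-equivariant --- and observe that the support of a block sum is the disjoint union of the supports of the summands; hence $\mu_{\n,\m}(x,y)$ has proper support in $\n\sqcup\m$ exactly when $x$ or $y$ has proper support, which is the stated condition. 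This is the step I expect to require the most care, because it must be run at the level of the geometric realizations $B(q,GL_n(R))$ rather than on discrete sets; but, exactly as in the proof of Theorem~\ref{thm:compare_g}, after passing to singular simplicial sets the monomorphism condition becomes levelwise and the condition on $\mu$ becomes pointwise, and each then reduces to the elementary combinatorics of supports of tuples in $GL_n(R)$ (resp.\ $\Sigma_n$). Granting this, $\uhocolim_\I B(q,GL_{-}(R))\simeq\hofib\rho^{B(q,GL_{-}(R))}$ and likewise for $\Sigma_{-}$.

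Finally I would identify the top of each filtration. The conjugation action of $\Sigma_n$ on $BGL_n(R)$, being induced by inner automorphisms of $GL_n(R)$, is trivial on homology, $\pi_0(BGL_n(R))$ is trivial, the commutator subgroup of $\pi_1(BGL_\infty(R))=GL_\infty(R)$ is $E_\infty(R)$ and hence perfect by Whitehead's lemma, and $BGL_\infty(R)^{+}$ is abelian because it is an infinite loop space; so Theorem~\ref{loop permutative} applies and gives $\hofib\rho^{BGL_{-}(R)}\simeq BGL_\infty(R)^{+}$, which combined with Theorem~\ref{thm:compare_g} yields $\uhocolim_\I BGL_{-}(R)\simeq BGL_\infty(R)^{+}$. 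The identical argument with $B\Sigma_n$ in place of $BGL_n(R)$, together with the Barratt--Priddy--Quillen theorem, gives $\uhocolim_\I B\Sigma_{-}\simeq B\Sigma_\infty^{+}\simeq Q_0(\S^0)$. These are the classical non-unital $E_\infty$-ring spaces underlying the algebraic $K$-theory spectrum of $R$ and the sphere spectrum, and one checks that the non-unital $E_\infty$-ring structure produced by Theorem~\ref{thm:hofib_ringspace} from $BGL_{-}(R)$ (resp.\ $B\Sigma_{-}$) agrees with the classical one --- for the sphere this is the multiplicative form of the Barratt--Priddy--Quillen theorem due to Segal and May. Assembling these identifications proves the corollary. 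I note that, in contrast to Theorem~\ref{cor infinite loop spaces}, the homological-triviality hypothesis of Theorem~\ref{loop permutative} is unavailable for the intermediate terms --- it fails already for $\Sigma_3$ acting on $B(2,\Sigma_3)$ --- so those terms are only described as the homotopy fibers $\hofib\rho$, and the corollary claims no more than that they assemble into a filtration by non-unital $E_\infty$-ring spaces.
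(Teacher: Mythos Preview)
Your proposal is correct and follows essentially the same approach as the paper: the corollary is stated without a separate proof because the argument is the discussion immediately preceding it, which applies Theorems~\ref{thm:hofib_ringspace} and~\ref{thm:compare_g} to the commutative $\I$-rigs $B(q,GL_{-}(R))$ and $B(q,\Sigma_{-})$, and then invokes Theorem~\ref{loop permutative} together with the Barratt--Priddy--Quillen theorem to identify the final terms. Your write-up usefully fills in details the paper only asserts---notably the verification via supports of the second hypothesis of Theorem~\ref{thm:compare_g}, and the explicit check (Whitehead's lemma, etc.) that Theorem~\ref{loop permutative} applies at the top of the filtration---but the logical structure is identical.
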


\section{Transitional nilpotence, bundles and K-theory}\label{K theory}

In this section we extend the notions of transitionally commutative bundles
and commutative K-theory as defined in \cite{AG} to more general 
$q$-nilpotent notions for $q\ge 2$, reflecting the filtration induced by the
descending central series of the free groups. We will show that these geometrically
defined theories are represented by the infinite loop spaces $\mathbb Z\times B(q,U)$.

\begin{definition}
For a CW-complex $X$ a 
principal $G$-bundle $\pi:E\to X$ is said to have \textsl{transitional nilpotency class} 
at most $q$ if there exists an open cover $\{U_{i}\}_{i\in I}$ of $X$ 
such that the bundle $\pi:E\to X$ is trivial over each $U_{i}$ and for 
every $x\in X$ the group generated by the collection $\{\rho_{i,j}(x)\}_{i,j}$ is a group of 
nilpotency class at most $q$. Here $\rho_{i,j}:U_{i}\cap U_{j}\to G$  denotes 
the transition functions and $i,j$ run through all indices in $I$ for which 
$x\in U_{i}\cap U_{j}$. The minimum of all such numbers $q$ is said to be 
transitional nilpotency class of  $\pi:E\to X$.
 \end{definition}

The principal $G$-bundle $p_{q}:E(q,G)\to B(q,G)$  
is  universal for all principal $G$-bundles with  
transitional nilpotency class less than $q$. 

\begin{theorem}\label{geometric interpretation} 
Assume that $G$ is an algebraic subgroup of $GL_{N}(\C)$ 
for some $N\ge 0$, $X$ is a finite CW-complex and that 
$\pi:E\to X$ is a principal $G$-bundle over $X$. Then, 
for any $q\ge 2$, the classifying map $f:X\to BG$ of $\pi$ 
factors through $B(q,G)$ (up to homotopy) if and only if 
$\pi$ has transitional nilpotency class less than $q$. 
\end{theorem}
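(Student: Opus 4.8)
The plan is to prove both implications by reducing to a statement about classifying maps and transition functions on a finite CW-complex.

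First I would handle the easy direction: if $f \colon X \to BG$ factors up to homotopy through $B(q,G)$, then $\pi$ has transitional nilpotency class less than $q$. For this I would use the fact recorded just before the theorem, namely that $p_q \colon E(q,G) \to B(q,G)$ is universal for principal $G$-bundles of transitional nilpotency class less than $q$. If $f$ factors (up to homotopy) as $X \to B(q,G) \to BG$, then the pullback of $p_q$ along $X \to B(q,G)$ is a principal $G$-bundle isomorphic to $\pi$ (using homotopy invariance of pullbacks over the finite, hence paracompact, complex $X$), and since it is pulled back from the universal bundle $E(q,G)$ it has transitional nilpotency class at most $q$ by definition of that bundle. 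Actually one must be a little careful that "less than $q$" versus "at most $q$" is used consistently; since $B(q,G) \subset B(q+1,G)$, having class at most $q$ is the same as having class less than $q+1$, and the statement of the theorem is about class less than $q$, so I would simply match the indexing of the universal bundle $p_q \colon E(q,G) \to B(q,G)$ to the definition and conclude directly.

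For the converse, suppose $\pi$ has transitional nilpotency class less than $q$. Choose a trivializing open cover $\{U_i\}$ of $X$ with transition functions $\rho_{i,j} \colon U_i \cap U_j \to G$ such that at each point $x$ the group generated by the $\rho_{i,j}(x)$ is nilpotent of class $< q$. Since $X$ is a finite CW-complex, I can refine this to a \emph{finite} good cover (all intersections contractible or empty), using the algebraic structure on $G$ as a subgroup of $GL_N(\mathbb C)$ only to the extent needed to keep everything in the semi-algebraic/CW world — though in fact for this step only paracompactness and finiteness of $X$ are needed. From a finite trivializing cover one builds the classifying map via the standard nerve construction: a partition of unity subordinate to the cover gives a map $X \to |N(\mathcal U)|$ to the nerve, and the transition functions assemble into a map $|N(\mathcal U)| \to BG$ realizing the bar construction. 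The key point is that because at every $x$ the relevant transition functions generate a subgroup of nilpotence class $< q$, the simplices produced by the bar/nerve construction all land in $B_n(q,G) = \Hom(F_n/\Gamma_n^q, G) \subset G^n$ rather than merely in $G^n$; hence the classifying map factors through $B(q,G)$ by construction, not merely up to homotopy.

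The main obstacle is the converse direction, and within it the point requiring the algebraicity hypothesis on $G$: one needs the various spaces $B_n(q,G)$ to be sufficiently well-behaved (CW-complexes with the subspaces $S_n^1(q,G)$ as subcomplexes, so that $B_*(q,G)$ is a proper simplicial space) for the nerve-to-bar-construction argument to produce an honest map into the geometric realization $B(q,G)$ and for homotopies to be transferred correctly; this is exactly the role played by the semi-algebraic triangulation theorem as in the proof of Theorem \ref{reduction to compact case}. I would also need to verify that the homotopy class of the resulting factored map $X \to B(q,G)$ composes to the correct class $f$ in $[X, BG]$, which follows because both are classified by the isomorphism class of $\pi$ and $X$ is finite, so that $[X,-]$ detects bundle isomorphism in the usual way. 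The finiteness of $X$ is used both to get a finite cover (hence an honest simplicial map into a finite-dimensional nerve) and to invoke homotopy invariance of principal bundles.
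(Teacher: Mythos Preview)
Your outline matches the argument of \cite{AG} to which the paper's proof defers, and you have correctly located the role of the algebraicity hypothesis: it yields properness of the simplicial space $B_*(q,G)$ via semi-algebraic triangulation, exactly as used in the proof of Theorem~\ref{reduction to compact case}.

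One point needs repair. The sentence preceding the theorem---that $p_q \colon E(q,G) \to B(q,G)$ is universal for bundles of transitional nilpotency class less than $q$---is a \emph{summary} of what the theorem asserts, not an independently established fact, so invoking it for the ``easy'' direction is circular; nor does $E(q,G)$ enjoy this property ``by definition,'' since it is defined merely as the pullback of $EG$ along the inclusion $B(q,G) \hookrightarrow BG$. To prove that direction you must actually exhibit a trivializing open cover of $B(q,G)$ whose transition functions have the required nilpotency: one uses the open-star cover arising from the simplicial structure of the geometric realization, so that the transition functions at a point of an $n$-simplex labeled by $(g_1,\dots,g_n)\in B_n(q,G)$ are words in the $g_i$ and hence generate a subgroup of nilpotency class less than $q$. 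This step, too, relies on properness of $B_*(q,G)$. A minor imprecision in your converse: the transition functions assemble into a simplicial map from the \v{C}ech nerve (a simplicial \emph{space}, with $n$-simplices $\coprod U_{i_0}\cap\cdots\cap U_{i_n}$) to $B_\bullet G$, not from the abstract nerve $|N(\mathcal U)|$; the partition of unity then furnishes a homotopy inverse to the natural map from the realization of the \v{C}ech nerve down to $X$.
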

\begin{proof}
The case $q=2$ was treated in \cite[Thm. 2.2] {AG} and 
in fact this theorem is true for any Lie group in this case. The 
proof goes through verbatim also for $q>2$ using the fact 
that when $G$ is an algebraic subgroup of $GL_{N}(\C)$,  
then the simplicial space $B_{*}(q,G)$ is proper as 
was pointed out in the proof of Theorem \ref{reduction to compact case}.
\end{proof}

As $[\Sigma X, BG] = [X, \Omega BG]$ and the canonical map $\Omega B(q,G) \to \Omega BG$
always admits a splitting up to homotopy, any principal $G$-bundle on a suspension $\Sigma X$
has transitional nilpotency class less than $q$ for all $q$. 
However, the nilpotency structure is not unique in general, not even up to 
isomorphism  in the sense of the following definition.

\begin{definition}
Let $\pi_{0}:E_{0}\to X$ and $\pi_{1}:E_{1}\to X$ be 
two principal $G$-bundles 
with transitional nilpotency class less than $q$. We say that 
these bundles are $q$-transitionally isomorphic if there exists a  
principal $G$-bundle $p:E\to X\times [0,1]$ with transitional nilpotency class 
less than $q$ such that 
$\pi_{0}=p_{|p^{-1}(X\times\{0\})}$ and $\pi_{1}=p_{|p^{-1}(X\times\{1\})}$.
\end{definition}

A complex vector bundle $\pi:E\to X$ is said to have transitional nilpotency class 
less than $q$ if the corresponding frame bundle, under a fixed Hermitian 
metric on $E$, has  transitional nilpotency class less than $q$. 
Theorem \ref{splitting_thm} can then be interpreted to say that any vector bundle is stably of
transitional nilpotency class less than $q$ for all $q\geq 2$,
and there is a functorial choice of such a structure.
The set $Vect_{q-nil}(X)$ of $q$-transitionally isomorphism 
classes of  complex vector bundles over $X$ with transitional nilpotency 
class less than  $q$ is a monoid under the direct sum of vector bundles.
The $q$-nilpotent K-theory of $X$ is defined as the associated Grothendieck group.

\begin{definition} 
$K_{q-nil}(X):=Gr(Vect_{q-nil}(X))$.
\end{definition}

Tensor products induce a natural multiplication on $K_{q-nil} (X)$ just as in classical $K$-theory.

\begin{theorem}
For any finite CW-complex $X$ there is a natural isomorphism of rings
\[ 
K_{q-nil}(X)\cong [X,\Z\times B(q,U)]. 
\]
Hence, it is the zeroth term of a multiplicative generalized cohomology theory.
\end{theorem}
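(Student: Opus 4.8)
The plan is to produce a natural bijection between $q$-nilpotent vector bundles on $X$ and homotopy classes of maps into $B(q,U)$ by the classical three-step route --- unstable representability, stabilization, group completion --- while keeping track of the transitional nilpotency constraint throughout.

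First I would prove the \emph{unstable} statement: for each $n\ge 0$, pullback of the universal bundle $p_q\colon E(q,U(n))\to B(q,U(n))$ induces a natural bijection
\[
[X,B(q,U(n))]\ \xrightarrow{\ \cong\ }\ Vect^{n}_{q-nil}(X),
\]
where the target is the set of $q$-transitional isomorphism classes of rank-$n$ complex vector bundles on $X$ of transitional nilpotency class less than $q$. That this map is well defined and surjective is Theorem \ref{geometric interpretation}: a bundle of transitional nilpotency class $<q$ is precisely one whose classifying map $X\to BU(n)$ lifts up to homotopy through $B(q,U(n))$, and homotopic maps pull back $q$-transitionally isomorphic bundles (pull back over $X\times[0,1]$). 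For injectivity one uses that a map $X\times[0,1]\to B(q,U(n))$ is exactly a homotopy, so a $q$-transitional isomorphism between $g_0^{\ast}E(q,U(n))$ and $g_1^{\ast}E(q,U(n))$ --- being a bundle of transitional nilpotency class $<q$ on the \emph{finite} CW-complex $X\times[0,1]$ --- yields, again by Theorem \ref{geometric interpretation}, a homotopy from $g_0$ to $g_1$. This is the precise form of the universality of $p_q$ quoted after the definition of transitional nilpotency class, and \textbf{this unstable step is where the main difficulty lies}: one must verify carefully that $q$-transitional isomorphism coincides with concordance through bundles of transitional nilpotency class $<q$, so that it matches homotopy of maps into $B(q,U(n))$. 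By construction, block sum of matrices makes these bijections compatible, as $n$ grows, with $B(q,U(n))\hookrightarrow B(q,U(n+1))$ and with $E\mapsto E\oplus\underline{\C}$.

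Next I would stabilize. Since $X$ is finite and $B(q,U)=\ucolim_n B(q,U(n))$ as a colimit along cofibrations, $[X,B(q,U)]=\ucolim_n[X,B(q,U(n))]$, so the above assembles into a monoid isomorphism $Vect^{s}_{q-nil}(X):=\ucolim_n Vect^{n}_{q-nil}(X)\cong\bigl([X,B(q,U)],\oplus\bigr)$, where $\oplus$ is the homotopy-associative unital H-space structure on $B(q,U)$ corresponding to direct sum of vector bundles (block sum of matrices). The point, which is invisible bundle-theoretically, is that $B(q,U)$ is \emph{connected}, hence this H-space is grouplike and admits a homotopy inverse; consequently $[X,B(q,U)]$ is an abelian group, and with it the monoid $Vect^{s}_{q-nil}(X)$. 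In bundle terms this says that stably a bundle of transitional nilpotency class $<q$ has an additive inverse of the same type, a fact obtained here only through the infinite loop space structure of Theorem \ref{cor infinite loop spaces}.

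Finally I would compare $K_{q-nil}(X)=Gr(Vect_{q-nil}(X))$ with $[X,\Z\times B(q,U)]$. Writing $X$ as the disjoint union of its finitely many components and using $Gr(\prod_i M_i)\cong\prod_i Gr(M_i)$, one reduces to $X$ connected; there the rank homomorphism $Vect_{q-nil}(X)\to\N$ splits a copy of $\Z$ off the Grothendieck group, and the complementary summand is identified with $Vect^{s}_{q-nil}(X)$ using that the latter is a group. This gives $K_{q-nil}(X)\cong\Z\times Vect^{s}_{q-nil}(X)\cong\Z\times[X,B(q,U)]=[X,\Z\times B(q,U)]$, naturally in $X$. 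For the ring structure, note that the tensor product of two bundles of transitional nilpotency class $<q$ again has transitional nilpotency class $<q$, since pointwise its transition functions generate a quotient of a product of two groups of nilpotence class $<q$; thus tensor product defines the multiplication on $K_{q-nil}(X)$, and under the identifications above it corresponds to the multiplicative structure on $\Z\times B(q,U)$ coming from the tensor-product transformation $\pi$ of the commutative $\I$-rig $B(q,U(-))$ --- that is, the non-unital $E_{\infty}$-ring structure of Theorem \ref{cor infinite loop spaces} together with the unit supplied by the $\Z$-factor. Since $\Z\times B(q,U)$ is thereby the zeroth space of a connective (ring) spectrum, the represented functor $[-,\Z\times B(q,U)]$ is a multiplicative generalized cohomology theory, which is the final assertion.
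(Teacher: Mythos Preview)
Your proposal is correct and follows essentially the same route as the paper: both rest on Theorem~\ref{geometric interpretation} for the unstable representability $Vect^{n}_{q-nil}(X)\cong[X,B(q,U(n))]$, on compactness of $X$ to factor maps through a finite stage, and on the grouplike H-space structure of the connected space $B(q,U)$ to handle the passage to the Grothendieck group. The only difference is organizational: you split off the rank and identify the reduced part with the already-a-group $[X,B(q,U)]\cong Vect^{s}_{q-nil}(X)$, whereas the paper writes down the monoid map $Vect_{q-nil}(X)\to[X,\Z\times B(q,U)]$, factors it through $K_{q-nil}(X)$ by the universal property, and checks bijectivity directly---using a homotopy inverse in $\mathrm{Map}(X,B(q,U))$ to manufacture an inverse bundle $B'$ in the injectivity step.
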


\begin{proof}
Let $X$ be a finite CW-complex. By working one path-connected component at a time, 
we may assume without loss of generality that $X$ is path-connected.
By Theorem \ref{geometric interpretation},
$$
Vect _{q-nil} (X)  = [X, \bigsqcup_{n\ge 0}B(q,U(n))		]
$$ 
as abelian monoids where the addition is  induced by direct sum of matrices on the right hand side.
Any injection $\N \times \N  \to \N$ induces a linear injection $\C^\infty \times \C^\infty \to \C^\infty$
which in turn induces an  H-space product on $\Z \times B(q, U)$. The natural inclusions $B(q, U(n)) \to B(q, U)$ define a map
$$
[X, \bigsqcup_{n\ge 0}B(q,U(n))]
\longrightarrow  [X,\Z\times B(q,U)].
$$
As the symmetric groups act by homotopy equivalences  on $B(q,U)$, we see that the above map is compatible with the product structure 
on both sets, i.e. it is a map of monoids. By the universal property of the Grothendieck construction, this map factors through  
 a unique map of abelian groups
$$
K_{q-nil}(X) \longrightarrow [X,\Z\times B(q,U)].
$$
As $X$ is compact, any map $X \to B(q,U)$ factors through some $B(q, U(n))$ for some large enough $n$.  
Hence  the above map is surjective. 

To prove that it is injective, suppose that the image of $[A] - [B] \in K_{q-nil}(X)$ in $[X,\Z\times B(q,U)]$ is zero.  Let us write $f_{B} \colon X \longrightarrow B(q, U)$ for the image of a map representing $B$ in the colimit $B(q, U) = \ucolim_{n \in \mathbb{N}} B(q, U(n))$.  Since $B(q, U)$ is a group-like $H$-space, the induced product on $\mathrm{Map}(X, B(q, U))$ is also a grouplike $H$-space structure.  Let $f_{B'} \colon X \longrightarrow B(q, U)$ be a homotopy inverse for $f_{B}$ under this product.  Since $X$ is compact, we may factor $f_{B'}$ through a finite stage of the colimit and find a corresponding bundle $B'$ over $X$ with transitional nilpotency class less than $q$ which is classified by the map $f_{B'}$.  It follows that $B \oplus B'$ is stably $q$-transitionally isomorphic to the trivial bundle $\epsilon_{k}$ of rank $k = \dim B + \dim B'$.  By our assumption, we see that the image of $[A \oplus B'] - [\epsilon_{k}]$ in $[X,\Z \times B(q,U)]$ is also zero.  This means that $A \oplus B'$ is stably $q$-transitionally isomorphic to a trivial bundle, say $A \oplus B' \oplus \epsilon_{t} \cong \epsilon_{k + t}$.  We then have the relation
\[
[A] - [B] = [A \oplus B' \oplus \epsilon_t] - [\epsilon_{k + t}] = 0
\]
in $K_{q-nil}(X)$, which completes the proof.
\end{proof}

This answers the question raised in \cite{AG}
for $q=2$. 
Moreover, we have a sequence of cohomology theories and maps between them 
\[
K_{com}(X)=K_{2-nil}(X)\to K_{3-nil}(X)\to \cdots \to K_{q-nil}(X)\to \cdots \to K(X).
\]
By Theorem \ref{splitting_thm} topological K-theory splits off $q$-nilpotent K-theory for all $q\ge 2$.
These theories are not well understood and would seem to warrant further
attention. For example in \cite{AG} it was shown that 
$K_{com}(\mathbb S^i)\cong K(\mathbb S^i)$ for $0\le i\le 3$, but that
$K_{com}(\mathbb S^4)\ne K(\mathbb S^4)$.
 
We leave it to the reader to formulate $q$-nilpotent versions of
real and hermitian K-theory.


\begin{thebibliography}{99} 

\bibitem{ACT} A. Adem, F. R. Cohen and E. Torres-Giese. 
Commuting elements, simplicial spaces and filtrations of  
classifying spaces. Math. Proc. Cambridge Philos. Soc. 152, (2012), 
1, 91--114.

\bibitem{AG} A. Adem and J. M. G\'omez. 
A classifying space for commutativity in Lie groups.  
Algebr. Geom. Topol. 15, (2015), 493--535

\bibitem{Bergeron} M. Bergeron. The topology of nilpotent representations in 
reductive groups and their maximal compact subgroups.  Geometry and Topology
19-3, (2015), 1383--1407

\bibitem{Berrick} A. J.  Berrick.  The plus-construction and fibrations.  
Quart. J. Math. Oxford (2), 33, (1982), 149--157.

\bibitem{BF} A.K. Bousfield and E.M. Friedlander. Homotopy theory of $\Gamma$-spaces, 
spectra, and bisimplicial sets.  In Geometric applications of homotopy theory 
(Proc. Conf., Evanston, Ill., 1977) II, pp. 193--239, volume 658 of 
Springer Lecture Notes in Mathematics, Springer, Berlin (1978).

\bibitem{BK} A. K. Bousfield and D. M. Kan. Homotopy limits, completions and localizations.
Springer-Verlag, Berlin, 1972, Lecture Notes in Mathematics, Vol. 304.

\bibitem{EM} A.D. Elmendorf and M.A. Mandell, Rings, modules, and algebras 
in infinite loop space theory, Adv. Math. 205, (2006), 163--228.

\bibitem{FiedoroOgle} Z. Fiedorowicz and C. Ogle. Algebraic $K$-theory and
configuration spaces, Topology 29, (1990), 409--418.

\bibitem{Gritschacher} S. Gritschacher, Modules over operads and iterated loop spaces, Transfer Thesis, Oxford 2015.

\bibitem{Hironaka} H. Hironaka. Triangulations of algebraic sets. Algebraic geometry 
(Proc. Sympos. Pure Math., Vol. 29, Humboldt State Univ., Arcata, Calif., 1974), 
pp. 165-185. Amer. Math. Soc., Providence, R.I., 1975.

\bibitem{Lind} J. Lind.  Diagram spaces, diagram spectra and spectra of units. 
Algebr. Geom. Topol. 13, (2013), 4, 1857--1935.

\bibitem{MM} M. Mandell and J.P. May.  Equivariant orthogonal spectra and $S$-modules, Memoirs Amer. Math. Soc 755 (2002).

\bibitem{May} J.P. May. The Geometry of 
Iterated Loop Spaces. Springer Lecture Notes in
Mathematics. Springer, Berlin (1972).
 
\bibitem{May1} J.P. May. $E_{\infty}$-spaces, group completions, 
and permutative categories. In New developments 
in topology (Proc. Sympos. Algebraic Topology, Oxford, 1972), 
pages 61--93. London Math. Soc.  Lecture Note Ser., No. 11. 
Cambridge Univ. Press, London, 1974.

\bibitem{MQR} J.P. May.  $E_{\infty}$ ring spaces and $E_{\infty}$ ring spectra, Springer Lecture Notes in
Mathematics 577. Springer, Berlin (1977).


\bibitem{May2} J.P. May.  The spectra associated to permutative categories.  Topology 17, (1978), 225--228.

\bibitem{MS} D. McDuff and G. Segal. Homology fibrations and the \lq\lq group-completion" theorem, Inventiones 31, (1976), 279--284.

\bibitem{PS} A. Pettet and J. Souto. Commuting tuples in reductive groups and their maximal compact groups, Geom. Topol. 17, (2013), 2513--2593.

\bibitem{RW} O. Randal-Williams. \lq Group-completion', local coefficient systems and perfection. Q. J. Math. 64, (2013), no. 3, 795--803.
 
\bibitem{SS} S. Sagave and C. Schlichtkrull.  Diagram spaces and symmetric spectra, Adv. Math 231 (2012), no. 3--4, 2116--2193.
 
 
\bibitem{CS2} C. Schlichtkrull. The homotopy infinite symmetric product represents stable homotopy.  Algebr. Geom. Topol. 7, (2007), 1963--1977. 

\bibitem{CS} C. Schlichtkrull. Units of ring spectra and their traces in algebraic 
{$K$}-theory. Geom. Topol., 8, (2004), 645--673.

\bibitem{Schwede} S. Schwede.  $S$-modules and symmetric spectra.  Math. Ann. 319 (2001), 517?532.

\bibitem{Segal} G. Segal. Categories and cohomology theories. Topology 13, 
(1974), 293--312.



\end{thebibliography}
\end{document}